\newcommand{\Q}{\mathbb{Q}}
\newcommand{\C}{\mathcal{C}}
\newcommand{\R}{\mathbb{R}}
\newcommand{\N}{\mathbb{N}}
\newcommand{\U}{\mathcal{U}}
\newcommand{\D}{\mathcal{D}}
\newcommand{\dom}{\operatorname{dom}}
\newcommand{\ran}{\operatorname{ran}}
\newcommand{\diam}{\operatorname{diam}}
\newcommand{\supp}{\operatorname{supp}}
\newcommand{\B}{\mathcal{B}}
\newcommand{\A}{\mathcal{A}}
\newcommand{\M}{\mathcal{M}}
\newcommand{\one}{\mathbf{1}}
\newcommand{\phee}{\varphi}
\newcommand{\ol}{\overline}
\newtheorem{theorem}{Theorem}[section]
\newtheorem{lemma}[theorem]{Lemma}
\newtheorem{claim}[theorem]{Claim}
\newtheorem{corollary}[theorem]{Corollary}
\newtheorem{proposition}[theorem]{Proposition}
\theoremstyle{definition}
\newtheorem{definition}[theorem]{Definition}
\newtheorem{question}[theorem]{Question}
\newtheorem{example}[theorem]{Example}
\numberwithin{equation}{section}
\title[Effective Weak Convergence and Tightness]{Effective Weak Convergence and Tightness of Measures in Computable Polish Spaces}
\author{Diego A. Rojas}
\address{Department of Mathematics, University of Dallas, Irving, Texas 75062, United States}
\email{darojas@udallas.edu}
\address{Department of Mathematics and Statistics, Sam Houston State University, Huntsville, Texas 77340, United States}
\email{dar117@shsu.edu}
\thanks{ORCID: 0000-0001-9878-6264}
\begin{document}

\begin{abstract}
Prokhorov's Theorem in probability theory states that a family $\Gamma$ of probability measures on a Polish space is tight if and only if every sequence in $\Gamma$ has a weakly convergent subsequence. 
Due to the highly non-constructive nature of (relative) sequential compactness, however, the effective content of this theorem has not been studied. 
To this end, we generalize the effective notions of weak convergence of measures on the real line due to McNicholl and Rojas to computable Polish spaces. 
Then, we introduce an effective notion of tightness for families of measures on computable Polish spaces. 
Finally, we prove an effective version of Prokhorov's Theorem for computable sequences of probability measures.
\end{abstract}

\maketitle

\section{Introduction}
Recall that a sequence of finite Borel measures $\{\mu_n\}_{n\in\N}$ on a separable metric space $X$ \emph{weakly converges} to a measure $\mu$ if, for every bounded continuous real-valued function $f$ on $X$, $\lim_n\int_Xfd\mu_n=\int_Xfd\mu$. The space $\M(X)$ of finite Borel measures on a separable metric space $X$ forms a separable metric space when equipped with the Prokhorov metric. The Prokhorov metric was introduced by Prokhorov \cite{P56} in 1956 and metrizes the topology of weak convergence. There are multiple equivalent definitions of weak convergence, all summarized in a 1941 theorem due to Alexandroff \cite{A41} commonly known as the Portmanteau Theorem. 

Recently, McNicholl and Rojas \cite{MR23,R24} developed a framework to study the effective theory of weak convergence in $\M(\R)$. The framework consists of several effective definitions of weak convergence in $\M(\R)$ all shown to be equivalent to one another. 
In this paper, we generalize this framework to computable Polish spaces; that is, complete separable metric spaces with a computability structure. The key to generalizing the framework was to develop an effective notion of tightness for families of measures.

Recall that a family of measures $\Gamma\subseteq\M(X)$ is \emph{tight} if for every $\epsilon>0$, there is a compact subset $K_{\epsilon}$ of $X$ such that $\mu(X\setminus K_{\epsilon})<\epsilon$ for all $\mu\in\Gamma$. We seek to study the following result due to Prokhorov.

\begin{theorem}[Prokhorov's Theorem, \cite{P56}]\label{thm:prokh}
    Let $X$ be a Polish space, and let $\Gamma$ be a family of probability measures in $\M(X)$. The following are equivalent.
    \begin{enumerate}
        \item $\Gamma$ is tight.
        \item Every sequence in $\Gamma$ has a weakly convergent subsequence.
    \end{enumerate}
    In particular, a sequence $\{\mu_n\}_{n\in\N}$ of probability measures in $\M(X)$ is tight if and only if $\{\mu_n\}_{n\in\N}$ has a weakly convergent subsequence.
\end{theorem}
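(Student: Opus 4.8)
The plan is to prove the two implications separately, leaning on the Portmanteau characterizations of weak convergence throughout. For $(1)\Rightarrow(2)$ I would first use tightness to break the problem into compact pieces. Fix a sequence $\{\mu_n\}_{n\in\N}$ in $\Gamma$. By tightness, for each $m\in\N$ choose a compact $K_m\subseteq X$ with $\mu(X\setminus K_m)<1/m$ for every $\mu\in\Gamma$, and arrange $K_1\subseteq K_2\subseteq\cdots$ by replacing $K_m$ with $\bigcup_{i\le m}K_i$. On each compact metric space $K_m$ the sub-probability measures in $\M(K_m)$ form a sequentially weak-$*$ compact set, since $C(K_m)$ is separable and the weak-$*$ topology is metrizable on bounded sets (Banach--Alaoglu). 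Restricting each $\mu_n$ to $K_m$ and diagonalizing over $m$, I would extract one subsequence $\{\mu_{n_j}\}_j$ along which the restriction of $\mu_{n_j}$ to $K_m$ converges weakly to a finite measure $\nu_m$ on $K_m$, for every $m$. The $\nu_m$ are consistent and satisfy $\nu_m(K_m)\ge 1-1/m$, so they assemble into a Borel probability measure $\mu$ on $X$. To finish, for bounded continuous $f$ I would split $\int f\,d\mu_{n_j}$ into its part on $K_m$ and a tail of size at most $\norm{f}_\infty/m$, pass to the limit in $j$ on the compact part, and then send $m\to\infty$; this yields $\mu_{n_j}\to\mu$ weakly. (Alternatively, one may embed $X$ into the Hilbert cube and use compactness of the measure space there, with tightness ensuring no mass escapes the image of $X$.)

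For the converse $(2)\Rightarrow(1)$ I would argue by contraposition, and it is here that completeness and separability of $X$ are indispensable. Fix a countable dense set $\{x_i\}_{i\in\N}$. The key reduction is that in a complete separable metric space, $\Gamma$ is tight if and only if for all $\epsilon>0$ and $\delta>0$ there is a finite union $F$ of $\delta$-balls centered at points of $\{x_i\}$ with $\mu(F)>1-\epsilon$ for all $\mu\in\Gamma$; this rests on the fact that a closed, totally bounded subset of a complete space is compact. So if $\Gamma$ is not tight there are $\epsilon>0$ and $\delta>0$ for which no such finite union has measure exceeding $1-\epsilon$ uniformly over $\Gamma$. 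Hence for each $n$ I can select $\mu_n\in\Gamma$ with $\mu_n(G_n)\le 1-\epsilon$, where $G_n=\bigcup_{i\le n}B(x_i,\delta)$. If some subsequence $\mu_{n_k}\to\mu$ weakly, then since each $G_m$ is open the Portmanteau Theorem gives $\mu(G_m)\le\liminf_k\mu_{n_k}(G_m)\le 1-\epsilon$, because $\mu_{n_k}(G_m)\le\mu_{n_k}(G_{n_k})\le 1-\epsilon$ once $n_k\ge m$. As $G_m\uparrow X$, continuity from below forces $\mu(X)\le 1-\epsilon<1$, contradicting that a weak limit of probability measures has total mass $1$.

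The hard part will be the subsequence extraction in $(1)\Rightarrow(2)$: it depends on the sequential weak compactness of sub-probability measures on a compact metric space and on gluing the limits $\nu_m$ into a single measure while controlling the tails uniformly. This appeal to compactness is exactly the non-constructive ingredient highlighted in the introduction. In the converse direction the only subtle point is the total-boundedness characterization of tightness, which is where completeness enters; everything after that is a direct Portmanteau estimate.
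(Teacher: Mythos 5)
Your proposal is correct in outline, but its forward direction takes a genuinely different route from the one the paper is built on. Note first that the paper never proves Theorem \ref{thm:prokh} itself --- it is cited from Prokhorov/Billingsley --- and what the paper actually extracts from Billingsley's argument (Lemma \ref{lem:bill}, and the skeleton of the proof of Theorem \ref{thm:etp}) is \emph{not} the functional-analytic argument you use. Billingsley's proof of $(1)\Rightarrow(2)$ fixes the compacts $K_m$ and a countable basis $\A$, forms the countable class $\mathcal{C}$ of finite unions of sets $\ol{A}\cap K_m$, extracts by a diagonal argument over this countable class a subsequence along which $\mu_{n_k}(C)$ converges for every $C\in\mathcal{C}$, and then defines the limit measure by the explicit formula $\mu(E)=\inf\{\sup\{\eta(C): C\subseteq U \wedge C\in\mathcal{C}\} : U\supseteq E \wedge U \text{ open}\}$. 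Your route instead invokes Riesz representation and Banach--Alaoglu to get weak-$*$ sequential compactness of the sub-probability measures on each $K_m$, diagonalizes over $m$, and glues the limits $\nu_m$. Both are standard and valid, but they buy different things: your argument is softer and shorter classically, yet it leans on exactly the non-constructive sequential compactness that the paper is at pains to avoid, whereas Billingsley's inf-sup formula defines $\mu$ explicitly from countably many numerical limits $\eta(C)$ --- which is precisely what lets the paper conclude that $\mu(U)$ is left-c.e.\ and hence that $\mu$ is computable in the effective version. One detail you should make explicit in your route: restriction of measures is \emph{not} weakly continuous (mass can accumulate on $\partial K_m$), so ``the $\nu_m$ are consistent'' requires proof; the correct statement is the monotonicity $\nu_m(A\cap K_m)\leq\nu_{m+1}(A\cap K_{m+1})$ for Borel $A$, obtained by comparing integrals of nonnegative continuous functions and using regularity of finite Borel measures on compact metric spaces, after which one sets $\mu(A)=\sup_m\nu_m(A\cap K_m)$ and checks countable additivity by monotone convergence. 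Your converse direction $(2)\Rightarrow(1)$ --- contraposition via the total-boundedness characterization of tightness (where completeness enters), the choice of $\mu_n$ with $\mu_n(G_n)\leq 1-\epsilon$, and the Portmanteau estimate on open sets combined with continuity from below --- coincides with Billingsley's argument and is exactly the classical skeleton that the paper effectivizes in Lemmas \ref{lem:diag} and \ref{lem:csc2et}.
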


Prokhorov's Theorem requires the completeness assumption on $X$ in order to find a necessary and sufficient condition for a family of probability measures to be tight. The issue is that the proofs of Prokhorov's Theorem available in the literature are highly ineffective. This is a consequence of dealing with (relative) sequential compactness. In general, theorems concerning sequential compactness such as the Bolzano-Weierstrass Theorem \cite{BGM12}, the Arzel\`a-Ascoli Theorem \cite{A14a}, and Helly's Selection Theorem \cite{A14b}, are highly ineffective. Thus, the main question we seek to answer is the following:

\begin{question}
Is there an effective version of Theorem \ref{thm:prokh}?
\end{question}

In this paper, we focus on Theorem \ref{thm:prokh} for sequences of probability measures. We show that it is possible to compute an effectively weakly convergent subsequence of an effectively tight sequence with additional computable information obtained in a non-uniform manner. However, we also exhibit a counterexample to the converse: a computable sequence of probability measures containing an effectively weakly convergent subsequence that is not effectively tight.

The paper is structured as follows. Section 2 contains the necessary background in computable analysis and computable measure theory. We then introduce effective tightness of measures in Section 3. In Section 4, we generalize effective weak convergence of measures to computable Polish spaces using effective tightness. In Section 5, we provide in full detail the proof of an effective version of one direction of Theorem \ref*{thm:prokh} as well as a counterexample to the converse. Finally, we offer concluding remarks and open questions in Section 6.

\section{Background}

Let $(X,d)$ be a separable metric space. In this paper, we denote the set of all continuous real-valued functions on $X$ by $C(X)$, the set of all bounded continuous real-valued functions on $X$ by $C_b(X)$, and the set of all compactly-supported continuous real-valued functions on $X$ by $C_K(X)$. 
We define the \emph{support} of a function $f\in C(X)$ to be the set $\supp{f}=\overline{X\setminus f^{-1}(\{0\})}$, where $\overline{A}$ denotes the closure of $A$. For $A\subseteq X$, denote by $\one_A$ the function given by
\[
    \one_A(x)=\begin{cases}
        1 & \text{if }x\in A,\\
        0 & \text{if }x\not\in A.
    \end{cases}
\]

For $x\in X$ and $A\subseteq X$, let $d(x,A)=\inf_{a\in A}d(x,a)$. 
For $\epsilon>0$, we denote the open ball of radius $\epsilon$ centered at $a\in X$ by $B(a,\epsilon)$ and  the closed ball of radius $\epsilon$ centered at $a$ by $\overline{B}(a,\epsilon)$.
For $\epsilon>0$ and $A\subseteq X$, $B(A,\epsilon)=\bigcup_{a\in A}B(a,\epsilon)$ is called the open $\epsilon$-neighborhood of $A$.
We denote the Borel $\sigma$-algebra of $X$ by $\B(X)$. 
The Prokhorov metric $d_P$ is defined as follows: for any $\mu,\nu\in\M(X)$, $d_P(\mu,\nu)$ is the infimum over all $\epsilon>0$ such that 
\[\mu(A)\leq\nu(B(A,\epsilon))+\epsilon \ \text{and} \ \nu(A)\leq\mu(B(A,\epsilon))+\epsilon
\] 
for all $A\in\B(X)$. For $x\in X$, denote by $\delta_x$ the Dirac measure on $x$: for any $A\in\B(X)$,
\[
    \delta_x(A)=\begin{cases}
        1 & \text{if }x\in A,\\
        0 & \text{if }x\not\in A.
    \end{cases}
\]

Fix a measure $\mu\in\M(X)$, and let $A \subseteq X$. $A$ is said to be a \emph{$\mu$-continuity set} if $\mu(\partial A)=0$. 

Below, we state a version of the classical Portmanteau Theorem. 
Although there are as many as ten equivalent definitions of weak convergence (see \cite{LKN16}), we will focus on five.

\begin{theorem}[Classical Portmanteau Theorem, \cite{A41}]\label{thm:cpt}
Let $\{\mu_n\}_{n\in\N}$ be a sequence in $\M(X)$. The following are equivalent.
	\begin{enumerate}
    		\item $\{\mu_n\}_{n\in\N}$ weakly converges to $\mu$.
    		\item For every uniformly continuous $f\in C_b(X)$,
    		$\lim\limits_{n\rightarrow\infty}\int_{X}fd\mu_n=\int_{X}fd\mu$.
    		\item For every closed $C\subseteq X$, 
			$\limsup\limits_{n\rightarrow\infty}\mu_n(C)\leq\mu(C)$.
    		\item For every open $U\subseteq X$, 
			$\liminf\limits_{n\rightarrow\infty}\mu_n(U)\geq\mu(U)$.
    		\item For every $\mu$-continuity set $A\subseteq X$, 
			$\lim\limits_{n\rightarrow\infty}\mu_n(A)=\mu(A)$.
	\end{enumerate}
\end{theorem}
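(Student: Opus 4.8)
The statement is the classical Portmanteau theorem, so the plan is to prove it by the standard cycle of implications $(1)\Rightarrow(2)\Rightarrow(3)\Rightarrow(4)\Rightarrow(5)\Rightarrow(1)$, exploiting from the outset that testing against the constant function $f\equiv 1$ forces the total masses to converge, $\mu_n(X)\to\mu(X)$ (this is automatic when the $\mu_n$ are probability measures, and it is what couples the closed-set and open-set formulations). The implication $(1)\Rightarrow(2)$ is immediate, since uniformly continuous bounded functions are in particular bounded and continuous.

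For $(2)\Rightarrow(3)$, I would fix a closed set $C$ and, for each $\epsilon>0$, introduce the Lipschitz (hence uniformly continuous) cutoff $f_\epsilon(x)=\max\{0,\,1-\epsilon^{-1}d(x,C)\}$, which is bounded, equals $1$ on $C$, and vanishes outside the open neighborhood $B(C,\epsilon)$. Then $\mu_n(C)\le\int_X f_\epsilon\,d\mu_n$, so $\limsup_n\mu_n(C)\le\int_X f_\epsilon\,d\mu\le\mu(B(C,\epsilon))$ by (2); letting $\epsilon\downarrow 0$ and using continuity from above of the finite measure $\mu$ together with $\bigcap_{\epsilon>0}B(C,\epsilon)=\overline{C}=C$ yields $\limsup_n\mu_n(C)\le\mu(C)$.

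The equivalence $(3)\Leftrightarrow(4)$ is obtained by complementation: for open $U$ I set $C=X\setminus U$ and combine $\mu_n(U)=\mu_n(X)-\mu_n(C)$ with the convergence of total masses. With both (3) and (4) in hand, $(4)\Rightarrow(5)$ follows by sandwiching a $\mu$-continuity set $A$ between its interior and closure: (4) gives $\liminf_n\mu_n(A)\ge\mu(\Int A)$ and (3) gives $\limsup_n\mu_n(A)\le\mu(\overline{A})$, and since $\mu(\partial A)=0$ these bounds both coincide with $\mu(A)$.

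The main obstacle is $(5)\Rightarrow(1)$. Here I would reduce to $0\le f\le M$ by an affine change and use the layer-cake identity $\int_X f\,d\nu=\int_0^M\nu(\{f>t\})\,dt$. The level set $\{f>t\}$ has boundary contained in $\{f=t\}$, and the sets $\{f=t\}$ are pairwise disjoint, so $\mu(\{f=t\})>0$ for at most countably many $t$; hence for Lebesgue-almost every $t$ the level set is a $\mu$-continuity set and (5) gives $\mu_n(\{f>t\})\to\mu(\{f>t\})$. A dominated-convergence argument in the variable $t$ (the integrands are uniformly bounded by $\mu_n(X)$, which converges) then passes the limit through the integral, yielding $\int_X f\,d\mu_n\to\int_X f\,d\mu$ and hence (1). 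The delicate points to get right are the justification of the dominated-convergence step in $t$ and the bookkeeping around convergence of total mass, which is precisely where the finiteness (or probability) assumption on the measures enters.
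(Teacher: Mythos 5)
The paper does not actually prove Theorem \ref{thm:cpt}; it is quoted as background and attributed to Alexandroff, so your proposal has to be judged on its own merits rather than against a proof in the text. Most of it is the standard argument and is fine: $(1)\Rightarrow(2)$ is trivial; your $(2)\Rightarrow(3)$ via the Lipschitz cutoffs $f_\epsilon$ and continuity from above of the finite measure $\mu$ is correct; and your $(5)\Rightarrow(1)$ via the layer-cake formula, countability of the levels $t$ with $\mu(\{f=t\})>0$, and dominated convergence in $t$ is also correct --- note that under (5) the mass convergence $\mu_n(X)\to\mu(X)$ really is available, since $X$ is a $\mu$-continuity set ($\partial X=\emptyset$).

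The genuine gap is at $(3)\Rightarrow(4)$, and it propagates into $(4)\Rightarrow(5)$, where you use (3) as well. You invoke $\mu_n(X)\to\mu(X)$ ``from the outset,'' justified by testing against $f\equiv 1$; but that test is exactly condition (1)/(2), which you do not have when the hypothesis is (3) alone --- in a cyclic proof each implication must stand on its own hypothesis. From (3) you only get $\limsup_n\mu_n(X)\le\mu(X)$, and from (4) only $\liminf_n\mu_n(X)\ge\mu(X)$. Since the theorem is stated for arbitrary finite measures in $\M(X)$ rather than probability measures, this cannot be patched: $\mu_n\equiv 0$ with $\mu\neq 0$ satisfies (3) but none of (1), (2), (4), (5), and $\mu_n=2\mu$ with $\mu\neq0$ satisfies (4) but not (5). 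So for $\M(X)$ as written, (3) and (4) are simply not equivalent to the rest, and any correct proof must either restrict to probability measures (where your parenthetical remark applies and your argument becomes the standard, correct one) or strengthen (3) and (4) to include the total-mass condition $\mu_n(X)\to\mu(X)$, proving the cycle in the form $(1)\Rightarrow(2)\Rightarrow(3{+}\text{mass})\Rightarrow(4{+}\text{mass})\Rightarrow(5)\Rightarrow(1)$. You should say explicitly which reading you adopt; as written, the step ``(3) together with convergence of total masses gives (4)'' assumes something that is unavailable at that point in the cycle (and, incidentally, the same subtlety is glossed over in the paper's statement itself).
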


\subsection{Background from computable analysis}

We presume familiarity with the fundamentals of computability theory as expounded in \cite{Cooper.2004}.  
For a more expansive treatment of computable analysis, see \cite{BH21,Weihrauch.2000}.

A \emph{computable metric space} is a triple $(X,d,\alpha)$ with the following properties:
\begin{enumerate}
    \item $(X,d)$ is a separable metric space;
    \item $\alpha:\subseteq\N\rightarrow X$ is an effective enumeration such that $\dom{\alpha}$ is computable and $\overline{\ran{\alpha}}=X$;
    \item $d(\alpha(i),\alpha(j))$ is computable uniformly in $i$ and $j$.
\end{enumerate}
The points in $\ran{\alpha}$ are known as \emph{rational points}. 
If $(X,d)$ is complete, we call $(X,d,\alpha)$ a \emph{computable Polish space}. 

\begin{example}
    For each $n\in\N$, Euclidean space $(\R^n,|\cdot|,\alpha_{\Q^n})$, where $\alpha_{\Q^n}:\N\rightarrow\Q^n$ is an enumeration of all rational vectors in $\R^n$, is a computable Polish space.
\end{example}

\begin{example}
    The Cantor space $(2^{\omega},d_H,\alpha_{2^{<\omega}})$, where $\alpha_{2^{<\omega}}:\subseteq\N\rightarrow2^{\omega}$ is an enumeration of all sequences of the form $\sigma0^{\omega}$ with $\sigma\in2^{<\omega}$ and $d_H$ is the Hamming distance on $2^{\omega}$, is a computable Polish space.
\end{example}

\begin{example}
    If $(X,d,\alpha)$ is a computable Polish space, then so is the space $(\M(X),
    d_P,\alpha_{\D})$, where $\alpha_{\D}:\subseteq\N\rightarrow\M(X)$ is an enumeration of all finite rational linear combinations of Dirac measures on $\ran\alpha$.
\end{example}

Throughout the rest of this paper, we let $(X,d,\alpha)$ be a computable Polish space with $\alpha(i)=s_i$ for each $i\in\N$. 

Let $\N^{\N}$ denote the space of infinite sequences of natural numbers. 
Note that any naming system can be encoded as sequences in $\N^{\N}$ \cite{BH21}. 
Let $\Phi_e$ denote the $e$th partial computable functional on $\N^{\N}$, and let $\Phi_e^z$ denote the $e$th partial computable functional on $\N^{\N}$ relative to an oracle $z\in\N^{\N}$. 
Fix $x\in X$.  A \emph{(Cauchy) name} of $x$ is a sequence $\{s_i\}_{i \in \N}$ of rational points in $X$ so that 
$\lim_i s_i = x$ and so that $d(s_i, s_{i+1}) < 2^{-i}$ for all $n \in \N$.  
$x$ is \emph{computable} if it has a computable name.  
An index of such a name is also said to be an index of $x$.

A sequence $\{x_n\}_{n \in \N}$ in $X$ is \emph{computable} if $x_n$ is computable uniformly in $n$.
If $\{x_n\}_{n\in\N}$ converges to a point $x\in X$ in $d$, a \emph{modulus of convergence} for $\{x_n\}_{n\in\N}$ is a function $\epsilon:\N\rightarrow\N$ such that for all $N\in\N$ and all $n\geq N$, $n\geq\epsilon(N)$ implies $d(x_n,x)<2^{-N}$.

Let $x$ be a real number. Say
$x$ is \emph{left-c.e.} (\emph{right-c.e.}) if its left (right) Dedekind cut is c.e.\  
It follows that $x$ is computable if and only if it is left-c.e. and right-c.e.

The following example of an incomputable left-c.e. real number is due to Specker \cite{S49}. 
Let $A$ be an incomputable c.e. set, and let $\{a_n\}_{n\in\N}$ be an effective enumeration of $A$. 
Then, $\{\sum_{i=0}^n2^{-(a_i+1)}\}_{n\in\N}$ is a computable increasing sequence of rationals called a \emph{Specker sequence}. 
Let $x=\lim_n\sum_{i=0}^n2^{-(a_i+1)}=\sum_{i=0}^{\infty}2^{-(a_i+1)}$. 
Since $x$ is the limit of a computable increasing sequence of rationals, it follows that $x$ is left-c.e. \ 
Moreover, $x$ is incomputable since $A$ is an incomputable set.

Suppose $\{a_n\}_{n \in \N}$ is a sequence of reals, and let $g : \subseteq \Q \rightarrow \N$. 
We say $g$ \emph{witnesses that $\liminf_n a_n$ is not smaller than $a$} if $\dom(g)$ is the left Dedekind cut of $a$ and if $r < a_n$ whenever $r \in \dom(g)$ and $n \geq g(r)$. 
We say $g$ \emph{witnesses that $\limsup_n a_n$ is not larger than $a$} if $\dom(g)$ is the right Dedekind cut of $a$ and if $r > a_n$ whenever $r \in \dom(g)$ and $n \geq g(r)$.
The following observation can be found in \cite{MR23}.

\begin{proposition}\label{prop:wit.2.mod}
    Suppose there is a computable witness that $\liminf_n a_n$ is not smaller than $a$, and suppose there is a computable
    witness that $\limsup_n a_n$ is not larger than $a$.  Then, $\lim_n a_n = a$, and $\{a_n\}_{n \in \N}$ has a computable modulus of convergence.
\end{proposition}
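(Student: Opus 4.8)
The plan is to extract a computable modulus of convergence directly from the two witnesses; since possession of such a modulus already forces $\lim_n a_n = a$, both conclusions of the proposition follow at once. Write $f$ for the computable witness that $\liminf_n a_n$ is not smaller than $a$ and $g$ for the computable witness that $\limsup_n a_n$ is not larger than $a$. By hypothesis $\dom(f)$ is the left Dedekind cut of $a$, so it consists of all rationals $r < a$ and satisfies $a_n > r$ for every $n \geq f(r)$; dually, $\dom(g)$ is the right Dedekind cut of $a$, consisting of all rationals $s > a$ with $a_n < s$ for every $n \geq g(s)$.

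First I would observe that since $f$ and $g$ are partial computable, their domains $\dom(f)$ and $\dom(g)$ are c.e., and hence the rationals strictly below $a$ and strictly above $a$ can each be effectively enumerated. Fix $N \in \N$. By dovetailing the two enumerations, I would search for a pair $(r,s)$ with $r \in \dom(f)$, $s \in \dom(g)$, and $s - r < 2^{-N}$. Such a pair exists because $\Q$ is dense in $\R$, so there are rationals arbitrarily close to $a$ on either side; hence the search halts. Setting $\epsilon(N) = \max(f(r), g(s))$ for the first such pair found yields a total function computable uniformly in $N$, since locating $r$ and $s$ in the enumerations simultaneously produces the values $f(r)$ and $g(s)$.

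It then remains to verify that $\epsilon$ is a modulus of convergence. For any $n \geq \epsilon(N)$ we have $n \geq f(r)$ and $n \geq g(s)$, whence $r < a_n < s$; since also $r < a < s$ with $s - r < 2^{-N}$, both $a_n$ and $a$ lie in the open interval $(r,s)$, and therefore $|a_n - a| < s - r < 2^{-N}$. This is exactly the property required of a modulus of convergence, and it immediately gives $\lim_n a_n = a$.

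The step I expect to be most delicate is the termination of the dovetailed search: we are given no name for $a$ in advance, so we cannot simply read off straddling rationals, and the argument must instead rely on the witnesses' domains being \emph{precisely} the Dedekind cuts of $a$. This is what guarantees simultaneously that arbitrarily tight straddling pairs do get enumerated and that, once a pair with $s - r < 2^{-N}$ surfaces, the associated thresholds $f(r)$ and $g(s)$ are already in hand. Everything else is a routine interleaving of the two witnesses.
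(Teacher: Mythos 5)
Your proof is correct: dovetailing the enumerations of the two witnesses' domains to find rationals $r < a < s$ with $s - r < 2^{-N}$, and setting $\epsilon(N) = \max(f(r), g(s))$, is exactly the argument behind this proposition (the paper itself defers the proof to the cited reference of McNicholl and Rojas, where it is carried out in the same way). The delicate points you flag --- that the domains are precisely the Dedekind cuts, guaranteeing both termination of the search and access to the threshold values --- are indeed the crux, and you handle them correctly.
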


A \emph{(functional-oracle) name} of a function $f\subseteq X\rightarrow\R$ is a pair $(e,z)\in\N\times\N^{\N}$ such that, if $\rho\in\N^{\N}$ is a name of $x\in X$, $\Phi_e^z(\rho)$ is a name of $f(x)\in\R$. 
We say $f$ is \emph{computable} if it has a computable name.
An index of $f$ is defined to be an index of a partial computable functional associated with $f$.
We denote the set of all bounded computable real-valued functions on $X$ by $C_b^c(X)$ and the set  of all compactly-supported computable real-valued functions on $X$ by $C_K^c(X)$.  A function $f: \subseteq X \rightarrow \R$ is \emph{lower semicomputable} if there is a partial computable functional $\Phi$ so that 
$\Phi(\rho)$ enumerates the left Dedekind cut of $f(x)$ whenever $\rho$ is a Cauchy name of $x$.
A function $f: \subseteq X \rightarrow \R$ is \emph{upper semicomputable} if $-f$ is lower semicomputable.

A function $F:\subseteq C(X)\rightarrow\R$ is \emph{computable} if there is a partial computable functional $\Phi$ 
so that $\Phi(\rho)$ is a Cauchy name of $F(f)$ whenever $\rho$ is a name of $f$.  
An index of such a functional $\Phi$ is also said to be an index of $F$.


Fix an effective enumeration $\{I_i\}_{i\in\N}$ of all rational open balls in $X$.
An open set $U\subseteq X$ is $\Sigma^0_1$ if $U=\bigcup_{i\in E}I_i$ for some c.e. set $E\subseteq\N$.
We denote the set of $\Sigma^0_1$ subsets of $X$ by $\Sigma^0_1(X)$.  
We say that $e \in \N$ \emph{indexes} $U \in \Sigma_1^0(X)$ if $U=\bigcup_{i\in W_e}I_i$, where $W_e$ denotes the $e$th c.e. set. 

A closed set $C\subseteq X$ is $\Pi^0_1$ if $X\setminus C$ is $\Sigma^0_1$. 
We denote the set of $\Pi^0_1$ subsets of $X$ by $\Pi^0_1(X)$.
Indices of sets in $\Pi^0_1(X)$ are defined analogously. 

Given an effective enumeration $\{I_i\}_{i\in\N}$ of all rational open balls in $X$ and an effective enumeration $\{\Delta_j\}_{j\in\N}$ of all finite subsets of $\N$, define the $j$th \emph{rational open set} $J_j$ by $J_j=\bigcup_{i\in\Delta_j}I_i$.
Thus, it is possible to effectively enumerate all rational open sets in $X$. 
For a compact set $K\subseteq X$, a \emph{(minimal cover) name} for $K$ is an enumeration of rational open sets containing $K$. We say $K$ is \emph{computably compact} if it has a computable name. An index of $K$ is defined to be an index of a name of $K$.

A $G_{\delta}$ set $G\subseteq X$ is $\Pi^0_2$ if there is a uniformly $\Sigma^0_1$ sequence $\{U_n\}_{n\in\N}$ such that $G=\bigcap_nU_n$. We will make use of the following theorem, which can be found in \cite{HR09a}.

\begin{theorem}[Computable Baire Category Theorem]\label{thm:cbct}
    Every dense $\Pi^0_2$ subset of a computable metric space contains a computable dense sequence.
\end{theorem}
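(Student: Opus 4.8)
The plan is to reduce the theorem to a single effective ``refinement'' procedure and then iterate it in the style of the classical Baire category argument, taking care that every choice is made uniformly and that strict inclusions of closed balls are witnessed by $\Sigma^0_1$ conditions. Write $G=\bigcap_n U_n$, where $\{U_n\}_{n\in\N}$ is a uniformly $\Sigma^0_1$ sequence; replacing $U_n$ by $\bigcap_{k\le n}U_k$ (still uniformly $\Sigma^0_1$) we may assume the $U_n$ are decreasing. Since $G$ is dense and $G\subseteq U_n$, each $U_n$ is a dense open set. It then suffices to produce, uniformly in $i$, a computable point $x_i\in G\cap I_i$: the resulting sequence $\{x_i\}_{i\in\N}$ is a computable sequence lying in $G$, and it is dense because the rational balls $\{I_i\}_{i\in\N}$ form a basis, so every nonempty open set contains some $I_i\ni x_i$.

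The core of the argument is the following refinement step, which I would isolate as a lemma: there is an algorithm that, given an index for a dense $\Sigma^0_1$ set $U$, a rational ball $B(s,r)$, and a rational $\epsilon>0$, outputs a rational ball $B(s',r')$ with $r'<\epsilon$ and $\overline{B}(s',r')\subseteq U\cap B(s,r)$. To implement it, write $U=\bigcup_{j\in W_e}I_j$ and dovetail two searches: enumerate $W_e$, and for each enumerated ball $I_j=B(c_j,\rho_j)$ search over rational points $s'$ and rational radii $0<r'<\epsilon$ for a pair satisfying the two strict inequalities $d(s',c_j)+r'<\rho_j$ and $d(s',s)+r'<r$. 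Each inequality is a $\Sigma^0_1$ condition on computable reals, and together they force $\overline{B}(s',r')\subseteq I_j\cap B(s,r)\subseteq U\cap B(s,r)$. The search halts because $U$ is dense: $U\cap B(s,r)$ is a nonempty open set, so it contains some $I_j$ with $j\in W_e$ together with a rational point $s'$ of $I_j\cap B(s,r)$ around which a sufficiently small rational radius satisfies both inequalities.

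With the refinement step in hand, I would build a nested sequence of rational closed balls. Set $B(s_0,r_0)$ by applying the refinement step to $U_0$, $I_i$, and $\epsilon=1$; and, given $B(s_n,r_n)$, obtain $B(s_{n+1},r_{n+1})$ by applying it to $U_{n+1}$, $B(s_n,r_n)$, and $\epsilon=2^{-(n+1)}$. By construction $\overline{B}(s_{n+1},r_{n+1})\subseteq B(s_n,r_n)\subseteq U_n$ and $r_n<2^{-n}$, so $d(s_m,s_n)\le r_n<2^{-n}$ for all $m>n$; thus $\{s_n\}_{n\in\N}$ is a computable Cauchy name. Invoking completeness of $X$ (which is essential, since the classical statement fails in incomplete spaces), the point $x_i=\lim_n s_n$ exists, lies in $\overline{B}(s_n,r_n)\subseteq U_n$ for every $n$, hence in $G$, and lies in $\overline{B}(s_0,r_0)\subseteq I_i$. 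Since every step is uniform in $i$ and in the given index for $G$, the name of $x_i$ is computable uniformly in $i$, which is what we needed.

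I expect the main obstacle to be the refinement step, specifically arranging that the chosen ball's closure---not merely the open ball---sits inside both $U$ and the previous ball, while keeping the selection effective. The resolution is to witness closed-ball inclusion by the sufficient strict inequalities above rather than by the exact (merely co-$\Sigma^0_1$) containment relation; density of each $U_n$, inherited from density of $G$, is precisely what guarantees that these dovetailed searches terminate.
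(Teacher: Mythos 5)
The paper itself contains no proof of this theorem: it is imported from Hoyrup and Rojas \cite{HR09a}, so there is no internal argument to compare yours against. In that source, completeness is part of the definition of a computable metric space, and read with that convention your proof is correct and is essentially the standard (and, as far as the literature goes, the cited) argument: reduce to computing, uniformly in $i$, a point of $G\cap I_i$; implement an effective refinement step in which the inclusion $\overline{B}(s',r')\subseteq I_j\cap B(s,r)$ is witnessed by the c.e.\ conditions $d(s',c_j)+r'<\rho_j$ and $d(s',s)+r'<r$ (the right way to dodge the fact that exact containment of closed sets is not semi-decidable); iterate with radii $r_n<2^{-n}$ to obtain a computable Cauchy name, all of whose tails stay inside $\overline{B}(s_n,r_n)\subseteq U_n$; and use completeness to produce the limit point, which then lies in $G\cap I_i$. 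Your termination argument is also right: density of each $U_n$ is inherited from density of $G$, and the dovetailed search over $j\in W_e$ and rational pairs $(s',r')$ must succeed.

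The point you flagged---that completeness is essential---deserves emphasis, because the statement as printed speaks of ``a computable metric space,'' and under \emph{this} paper's definitions (where completeness is exactly what upgrades a computable metric space to a computable Polish space) the statement is false; your proof proves what can be proved. For a counterexample, fix an incomputable real $\Omega$, let $D$ be the set of dyadic rationals, and let $X=D\cup(\Q+\Omega)\subseteq\R$ with the Euclidean metric and $D$ as its dense sequence of rational points. Then $G=X\setminus D=\Q+\Omega$ is dense in $X$ and is $\Pi^0_2$: indeed $G=\bigcap_k(X\setminus\{d_k\})$, and $X\setminus\{d_k\}=\bigcup\{B(q,r):q\in D,\ r\in\Q^{+},\ |q-d_k|\geq r\}$ is $\Sigma^0_1$ uniformly in $k$ because the constraint $|q-d_k|\geq r$ is decidable for dyadic $q,d_k$ and rational $r$. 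Yet every point of $G$ is an incomputable real, so $G$ contains no computable point of $X$ at all, let alone a computable dense sequence. (This mismatch is not idle for the paper, either: the theorem is later applied to $(0,\infty)$, which is incomplete in the inherited Euclidean metric; that application survives only because $(0,\infty)$ admits an equivalent complete computable metric, unlike the countable space $X$ just described.) So: your argument is correct for computable Polish spaces, which is the intended and the only tenable reading, and your explicit identification of where completeness enters is a feature, not a gap.
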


\subsection{Background from computable measure theory}

A measure $\mu\in\M(X)$ is \emph{computable} if $\mu(X)$ is a computable real and $\mu(U)$ is left-c.e. uniformly in an index of $U\in\Sigma^0_1(X)$;  i.e. it is possible to compute an index of the left Dedekind cut of $\mu(U)$ from an index of $U$.  A sequence of measures $\{\mu_n\}_{n\in\N}$ in $\M(X)$ is \emph{uniformly computable} if $\mu_n$ is a computable measure uniformly in $n$.

Corollary 4.3.1 in \cite{HR09b} gives us a way to characterize computable measures in terms of their integrals. The following proposition is a useful extension of this characterization. 

\begin{proposition}\label{prop:int}
For $\mu\in\M(X)$, the following are equivalent.
\begin{enumerate}
    \item $\mu$ is computable.
    \item $f\mapsto\int_Xf \ d\mu$ is computable on nonnegative $f\in C_b^c(X)$. That is, from an index of $f\in C_b^c(X)$ and a bound $B\in\N$ on $f$, it is possible to compute an index of $\int_Xf \ d\mu$.
    \item $f\mapsto\int_X f\ d\mu$ is computable on uniformly continuous $f\in C_b^c(X)$ such that $0\leq f\leq1$.
\end{enumerate}
\end{proposition}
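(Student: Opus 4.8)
The plan is to establish the cycle of implications $(1)\Rightarrow(2)\Rightarrow(3)\Rightarrow(1)$. The implication $(2)\Rightarrow(3)$ is immediate: any uniformly continuous $f\in C_b^c(X)$ with $0\le f\le 1$ is in particular a nonnegative function in $C_b^c(X)$ with bound $B=1$, so the functional supplied by $(2)$ already computes $\int_X f\,d\mu$ uniformly from an index of $f$. The substance of the proposition therefore lies in $(1)\Rightarrow(2)$ and $(3)\Rightarrow(1)$.

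For $(1)\Rightarrow(2)$, I would use the layer-cake representation
\[
\int_X f\,d\mu=\int_0^B\mu(\{f>t\})\,dt,
\]
valid since $0\le f\le B$. The idea is to produce a left-c.e.\ lower bound and a right-c.e.\ upper bound that both converge to this integral, whence computability follows via Proposition \ref{prop:wit.2.mod}. Since $f$ is computable, for each rational $t$ the superlevel set $\{f>t\}=f^{-1}((t,\infty))$ is $\Sigma^0_1$ with an index computable from an index of $f$ and from $t$, so $\mu(\{f>t\})$ is left-c.e.\ uniformly by computability of $\mu$. The closed set $\{f\ge t\}=f^{-1}([t,\infty))$ has $\Sigma^0_1$ complement $\{f<t\}$, so $\mu(\{f\ge t\})=\mu(X)-\mu(\{f<t\})$ is right-c.e.\ uniformly, using that $\mu(X)$ is computable. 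Taking a uniform partition $0=t_0<\cdots<t_m=B$ of mesh $B/m$, the lower Darboux sum $\sum_k (t_k-t_{k-1})\mu(\{f>t_k\})$ is left-c.e.\ and the upper Darboux sum $\sum_k(t_k-t_{k-1})\mu(\{f\ge t_{k-1}\})$ is right-c.e., because $t\mapsto\mu(\{f>t\})$ and $t\mapsto\mu(\{f\ge t\})$ are non-increasing and agree off a countable, hence Lebesgue-null, set. As $m\to\infty$ both sums converge to the integral and their difference tends to $0$, yielding a computable modulus uniform in the index of $f$ and in $B$.

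For $(3)\Rightarrow(1)$, I must show that $\mu(X)$ is computable and that $\mu(U)$ is left-c.e.\ uniformly in an index of $U\in\Sigma^0_1(X)$. The constant $f\equiv 1$ is uniformly continuous with $0\le f\le 1$, so $(3)$ yields $\mu(X)=\int_X 1\,d\mu$ as a computable real. For the second requirement, fix $U=\bigcup_{i\in W_e}I_i$ with $I_i=B(a_i,r_i)$ rational balls. Because each $a_i$ is a rational point, $x\mapsto d(x,a_i)$ is computable, so for rational $\eta>0$ the truncated bump $\max(0,\min(1,\eta^{-1}(r_i-d(x,a_i))))$ is uniformly continuous, takes values in $[0,1]$, vanishes outside $I_i$, and is computable uniformly in its parameters. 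Setting
\[
f_n(x)=\max\{\max(0,\min(1,n(r_i-d(x,a_i)))):i\in W_{e,n},\ i\le n\}
\]
produces a uniformly computable, non-decreasing sequence of such functions with $f_n\uparrow\mathbf{1}_U$ pointwise: points of $U$ eventually lie strictly inside an enumerated ball and receive value $1$, while points outside $U$ lie outside every $I_i$ and receive value $0$. By the Monotone Convergence Theorem, $\int_X f_n\,d\mu\uparrow\mu(U)$, and each integral is computable by $(3)$ uniformly in $n$ and $e$; hence $\mu(U)=\sup_n\int_X f_n\,d\mu$ is left-c.e.\ uniformly, as required.

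I expect the main obstacle to be $(3)\Rightarrow(1)$, and specifically the passage from the merely $\Sigma^0_1$ description of $U$ to uniformly computable approximants from below. The temptation is to use $x\mapsto d(x,X\setminus U)$, but the distance to the co-c.e.\ closed set $X\setminus U$ is not available as a computable function in a general Polish space, since closed balls need not be computably compact. Building the approximants directly as maxima of bumps on the enumerated rational balls sidesteps this, and the care there is to arrange genuine monotonicity of $\{f_n\}_{n\in\N}$ so that the Monotone Convergence Theorem delivers a clean left-c.e.\ supremum rather than only a limit.
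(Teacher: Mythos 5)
Your proof is correct and follows essentially the same route as the proof the paper defers to \cite{MR23} (stated there for $X=\R$): your $(1)\Rightarrow(2)$ argument, Tonelli's layer-cake formula combined with the uniform left-c.e.\ and right-c.e.\ character of $t\mapsto\mu(\{f>t\})$ and $t\mapsto\mu(\{f\geq t\})$, is precisely the mechanism behind Lemma \ref{lm:sc.U}, and your $(3)\Rightarrow(1)$ argument, computable Lipschitz bumps over the enumerated rational balls increasing to $\one_U$ followed by the Monotone Convergence Theorem, is the same device the paper invokes (via Proposition C.7 of \cite{G05}) in the proof of Theorem \ref{thm:ept1}. In particular, your care in avoiding $x\mapsto d(x,X\setminus U)$ and in arranging genuine monotonicity of the bump sequence is exactly what makes the standard argument go through, so there is no gap.
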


The proof of Proposition \ref{prop:int} can be found in \cite{MR23} for the case $X=\R$ but may be generalized to any computable Polish space $X$.

Suppose $\mu \in \M(X)$ is computable. A pair $(U,V)$ of $\Sigma^0_1$ subsets of $X$ is \emph{$\mu$-almost decidable} if $U\cap V=\emptyset$, $\mu(U\cup V)=\mu(X)$, and $\ol{U\cup V}=X$.  
Given $A\in\B(X)$, we say that $(U,V)$ is a \emph{$\mu$-almost decidable pair of $A$} if, in addition, $U \subseteq A \subseteq X \setminus V$.  
We then say $A$ is \emph{$\mu$-almost decidable} if it has a $\mu$-almost decidable pair. 
Suppose $(U,V)$ is a $\mu$-almost decidable pair of $A$. Then, $e$ indexes $A$ if $e=\left<i,j\right>$ for some index $i$ of $U$ and some index $j$ of $V$.  We note that $\mu$-almost decidable sets are effective analogues of $\mu$-continuity sets.  The definition of $\mu$-almost decidable set is from \cite{R18}.

In this paper, we will use the following theorem from \cite{MR23} on approximating $\Pi^0_1$ subsets of $X$ with almost decidable sets. The result was originally proven for the case $X=\R$, but we obtain the result \emph{mutatis mutandis}.
\begin{theorem}\label{thm:apprx}
    Suppose $\mu \in \mathcal{M}(X)$ is computable, and let $C \in \Pi^0_1(X)$.
    Then, for every $r \in (\mu(C), \infty)\cap\Q$ there is a $\mu$-almost decidable set $B \supseteq C$ so 
    that $\mu(B) < r$. 
    Furthermore, an index of $B$ can be computed from $r$ and an index of $C$.
\end{theorem}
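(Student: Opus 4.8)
The plan is to build the required $\mu$-almost decidable set directly from a finite piece of the $\Sigma^0_1$ cover of $X \setminus C$, using only balls about rational centers so that the relevant distance functions are computable. Write $X \setminus C = \bigcup_{i \in W_e} I_i$ with $I_i = B(a_i, \rho_i)$, where each $a_i \in \ran\alpha$ and $\rho_i \in \Q$. For finitely many indices $i$ I will select a computable radius $t_i < \rho_i$ that is a \emph{$\mu$-continuity radius} for $a_i$, meaning $\mu(\{x : d(x,a_i) = t_i\}) = 0$, and then set
\[
V = \bigcup_i B(a_i, t_i), \qquad U = X \setminus \bigcup_i \overline{B}(a_i, t_i) = \bigcap_i \{x : d(x,a_i) > t_i\}.
\]
Since each $a_i$ is a rational point, $d(\cdot, a_i)$ is computable, so for computable $t_i$ both $V$ and $U$ are $\Sigma^0_1$, and indices for them are obtained uniformly. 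The set $B$ will then be $U$ (or any set sandwiched between $U$ and $X\setminus V$).

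The first effective step is to capture enough measure. Because $r > \mu(C)$ we have $\mu(X\setminus C) = \mu(X) - \mu(C) > \mu(X) - r$, and since $\mu$ is computable the measure of the open set $\bigcup_{i \le n} I_i$ is left-c.e.\ uniformly in $n$ and increases to $\mu(X\setminus C)$. Hence I can search, effectively in $r$ and the index of $C$, for an $n$ and a rational $\eta > 0$ together with a certified lower bound showing $\mu(\bigcup_{i\le n} I_i) > \mu(X) - r + 2\eta$; this search halts. The second, and main, effective step is to replace each $I_i$ (for $i \le n$) by a closed ball $\overline{B}(a_i, t_i) \subseteq I_i$ with $t_i$ a computable $\mu$-continuity radius and with $\mu(I_i \setminus \overline{B}(a_i, t_i))$ so small that the total loss stays below $\eta$; then $\mu(V) = \mu(\bigcup_i \overline{B}(a_i,t_i)) > \mu(X) - r$. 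To produce such a $t_i$, note that $\mu(B(a_i,t))$ and $\mu(\{x:d(x,a_i)>t\})$ are left-c.e.\ uniformly in rational $t$, so the jump $\mu(\{x:d(x,a_i)=t\}) = \mu(X) - \mu(B(a_i,t)) - \mu(\{x:d(x,a_i)>t\})$ is right-c.e.\ uniformly in $t$. I will run a binary search on an interval $(\rho_i - \theta, \rho_i)$, with $\theta$ chosen to keep the measure loss small, at each stage keeping the subinterval carrying the smaller mass; since the total mass is finite, the masses of the nested intervals tend to $0$, so the intervals converge to a computable real $t_i$ whose sphere has measure $0$, and the search yields a fast Cauchy name for $t_i$.

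It then remains to verify that $(U, V)$ is a $\mu$-almost decidable pair of $B$. Disjointness is immediate since $V \subseteq \bigcup_i \overline{B}(a_i, t_i)$ while $U$ is its complement. For the full-measure condition, $X \setminus (U\cup V) \subseteq \bigcup_i \{x : d(x,a_i) = t_i\}$, a finite union of spheres each of measure $0$, so $\mu(U \cup V) = \mu(X)$; in particular $\mu(U) = \mu(X) - \mu(V) < r$. Containment $C \subseteq U$ holds because $\overline{B}(a_i, t_i) \subseteq I_i \subseteq X \setminus C$ for every $i$, whence $B \supseteq U \supseteq C$ and $\mu(B) = \mu(U) < r$. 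Assembling the uniformly obtained indices of $U$ and $V$ into $\langle i,j\rangle$ gives the index of $B$ computably from $r$ and the index of $C$. I expect the main obstacle to be the exact selection of $\mu$-continuity radii: the condition $\mu(\{x:d(x,a_i)=t\}) = 0$ is not semidecidable, so it must be forced in the limit by the mass-halving argument rather than certified at a finite stage. A related subtlety is the density requirement $\overline{U \cup V} = X$: since $X \setminus (U\cup V)$ lies in a finite union of spheres, it suffices that each chosen sphere have empty interior, and as only countably many radii yield a sphere with nonempty interior (distinct such spheres would contain disjoint balls, contradicting separability), the binary search can be arranged to avoid them, so density is secured alongside the measure-zero condition.
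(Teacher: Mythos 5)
Your construction has the right architecture, and it is essentially the one the paper intends (the paper itself gives no proof of this theorem, deferring to \cite{MR23} ``mutatis mutandis''): cover $X\setminus C$ by rational balls $I_i=B(a_i,\rho_i)$, shrink finitely many of them to balls whose bounding spheres are $\mu$-null, and take $B=U=X\setminus\bigcup_{i\le n}\overline{B}(a_i,t_i)$ with witness pair $(U,V)$; granted the radii $t_i$, your verification of the almost-decidability conditions, of $C\subseteq B$, and of $\mu(B)<r$ is correct. The genuine gap is in how you obtain the $t_i$. Your plan requires certifying, at a finite stage, an \emph{upper} bound on the loss $\mu(I_i\setminus\overline{B}(a_i,t_i))\le\mu(\{x:\rho_i-\theta<d(x,a_i)<\rho_i\})$, i.e., an effective upper bound on the measure of an open set. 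For a computable measure only \emph{lower} bounds on measures of $\Sigma^0_1$ sets are effective; the natural right-c.e.\ overestimate of this annulus, $\mu(X)-\mu(B(a_i,\rho_i-\theta))-\mu(\{x:d(x,a_i)>\rho_i\})$, exceeds the true loss by $\mu(\{x:d(x,a_i)=\rho_i-\theta\})+\mu(\{x:d(x,a_i)=\rho_i\})$, and the second term does not depend on $\theta$ and can be positive: nothing prevents $\mu$ from having an atom on $\partial I_i$, since the cover of $X\setminus C$ is handed to you and you cannot choose the $\rho_i$. In that case your search for an admissible $\theta$ never halts even though admissible values of $\theta$ exist, so the stated procedure is not an algorithm. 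The repair is to certify the gain rather than the loss: first search for \emph{rational} radii $t_i'<\rho_i$ (increasing $n$ if necessary) such that the left-c.e.\ quantity $\mu\bigl(\bigcup_{i\le n}B(a_i,t_i')\bigr)$ is certified to exceed $\mu(X)-r$; such radii exist by continuity from below and the condition is c.e., so this search halts. Only then enlarge each $t_i'$ to a continuity radius $t_i\in(t_i',\rho_i)$; since the balls only grow, the certified bound survives, and no upper bound on any loss is ever needed.

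A second, smaller gap concerns density, $\overline{U\cup V}=X$. You correctly note that only countably many radii give spheres with nonempty interior, but countability alone does not make them avoidable: a deterministic mass-halving search cannot avoid a countable set it cannot enumerate. What saves the argument is a sharper fact: a sphere $S(a_i,t)$ with nonempty interior must contain a rational point (rational points are dense), so every bad radius belongs to $\{d(a_i,s_j):j\in\N\}$, a uniformly computable sequence of reals, and avoiding the members of such a sequence \emph{is} effective --- this is exactly the role of the sets $V_{i,j}$ in Lemma \ref{lem:a.d.basis}. The cleanest repair is to drop the ad hoc binary search and obtain each $t_i$ from the computable Baire Category Theorem (Theorem \ref{thm:cbct}) applied, inside $(t_i',\rho_i)$, to the dense $\Pi^0_2$ set of radii $t$ that avoid every $d(a_i,s_j)$ and satisfy $\mu(\overline{B}(a_i,t))<\mu(B(a_i,t))+2^{-k}$ for all $k$; these latter sets are dense, open, and effectively open because closed-ball measures are right-c.e.\ while open-ball measures are left-c.e. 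This simultaneously repairs a further soft spot in your search: ``keeping the subinterval carrying the smaller mass'' is not a decidable comparison between two left-c.e.\ reals, so even the stage-by-stage choice in your binary search would need the same right-c.e.\ certification device applied to closed subannuli.
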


\section{Effective Tightness of Measures}

Classically, a family of measures $\Gamma\subseteq\M(X)$ is \emph{tight} if, for every $\epsilon>0$, there exists a compact $K\subseteq X$ such that $\mu(X\setminus K)<\epsilon$ for every $\mu\in\Gamma$. An effective definition of tightness would require a computable way to witness the tightness condition on a family of measures. To this end, we propose the following definition.

\begin{definition}
    A family of computable measures $\Gamma\subseteq\M(X)$ is \emph{effectively tight} if there is an effective procedure that, on input $N\in\N$, computes an index of a computably compact subset $K$ of $X$ such that $\mu(X\setminus K)<2^{-N}$ for all $\mu\in\Gamma$.
\end{definition}

Under this definition, every effectively tight sequence of measures is tight. In practice, to show that $\Gamma$ is effectively tight, it is enough to have an effective procedure that, on input $N\in\N$, computes an index of a finite union $B$ of rational closed balls of $X$ such that $\mu(X\setminus B)<2^{-N}$ for all $\mu\in\Gamma$. An \emph{effective tightness witness} for $\Gamma$ is a computable sequence $\{K_N\}_{N\in\N}$ of computably compact sets (or finite unions of rational closed balls) such that $\mu(X\setminus K_N)<2^{-N}$ for all $\mu\in\Gamma$. Thus, a family $\Gamma$ is effectively tight if it admits an effective tightness witness.

Below, we provide several examples of effectively tight families of measures. We start with the following observation.

\begin{proposition}\label{prop:finite}
If $\mu\in\M(X)$ is computable, then $\{\mu\}$ is effectively tight.
\end{proposition}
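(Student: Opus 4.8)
The plan is to effectivize the classical proof that every finite Borel measure on a Polish space is tight (Ulam's theorem), paying attention to two points: selecting the finite subcovers computably, and certifying that the resulting compact set is \emph{computably} compact.

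First, fix $N$ and write $\epsilon = 2^{-N}$. For each $n\ge 1$ and each $k$, let $U_{n,k}=\bigcup_{i\le k}B(s_i,2^{-n})$. Since $\{s_i\}$ is dense, $\bigcup_i B(s_i,2^{-n})=X$, so $\mu(U_{n,k})\nearrow\mu(X)$ as $k\to\infty$. Each $U_{n,k}$ is $\Sigma^0_1$ with an index computable from $(n,k)$, and $\mu$ is left-c.e.\ on $\Sigma^0_1$ sets uniformly in such an index; moreover $\mu(X)$ is a computable real. Hence I can compute a rational $t_n$ with $\mu(X)-\epsilon 2^{-n}<t_n<\mu(X)$, and then dovetail over $k$ and the enumeration of the left cut of $\mu(U_{n,k})$ until some rational exceeding $t_n$ appears. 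This search halts because $\mu(U_{n,k})\to\mu(X)>t_n$; let $k_n$ be the witnessing $k$. Then $\mu(U_{n,k_n})>t_n>\mu(X)-\epsilon 2^{-n}$, i.e.\ $\mu(X\setminus U_{n,k_n})<\epsilon 2^{-n}$, and $k_n$ is computable uniformly in $N$ and $n$.

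Next, set $A_n=\bigcup_{i\le k_n}\ol{B}(s_i,2^{-n})$ and $K=\bigcap_{n\ge 1}A_n$. The set $K$ is closed and totally bounded---for any $\rho>0$, choosing $n$ with $2^{-n}<\rho$ gives $K\subseteq A_n$, a union of $k_n$ balls of radius $<\rho$---so by completeness of $X$ it is compact, and since $U_{n,k_n}\subseteq A_n$ the estimate
\[
\mu(X\setminus K)=\mu\Big(\bigcup_{n\ge 1}(X\setminus A_n)\Big)\le\sum_{n\ge 1}\mu(X\setminus U_{n,k_n})<\epsilon\sum_{n\ge 1}2^{-n}=2^{-N}
\]
holds. (If $\epsilon\ge\mu(X)$ then $K$ may be empty, which causes no difficulty.)

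It remains to produce an index of $K$ as a computably compact set, uniformly in $N$; this is where the effective content lies. I would verify the two standard ingredients. On one hand, $K$ is $\Pi^0_1$ with a computable index: $X\setminus K=\bigcup_n(X\setminus A_n)$, and each $X\setminus A_n=\bigcap_{i\le k_n}\{x:d(x,s_i)>2^{-n}\}$ is a finite intersection of $\Sigma^0_1$ sets with indices computable from $(n,i)$, hence $\Sigma^0_1$ uniformly. On the other hand, the finite tuples $\{s_1,\dots,s_{k_n}\}$ furnish a computable total-boundedness modulus for $K$, since $K\subseteq\bigcup_{i\le k_n}B(s_i,2^{-n+1})$ for every $n$. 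The standard equivalence in computable analysis---that a subset of a computable Polish space is computably compact iff it is co-c.e.\ closed and admits a computable total-boundedness modulus---then yields a minimal cover name for $K$, with an index computed uniformly from the above data, and hence from $N$. I expect this last step, reconciling the outer cover structure with the co-c.e.\ complement so as to certify the index, to be the main obstacle, since the individual sets $A_n$ need not be compact (e.g.\ in infinite-dimensional spaces) and compactness genuinely emerges only in the intersection.
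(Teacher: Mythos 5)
Your proof is correct and is essentially the same argument that underlies the paper's treatment: the paper disposes of this proposition by citing Theorem 2 of Hoyrup and Rojas \cite{HR09c}, whose proof is precisely this effectivization of Ulam's theorem --- the same search for $k_n$ using left-c.e.-ness of $\mu(U_{n,k})$ together with computability of $\mu(X)$, and the same set $K=\bigcap_n\bigcup_{i\le k_n}\overline{B}(s_i,2^{-n})$ --- and your write-up also covers the paper's remark that the result extends from probability measures to arbitrary finite computable measures, since you never use $\mu(X)=1$. The equivalence you invoke in the final step (co-c.e.\ closed plus a computable total-boundedness modulus implies computably compact) is indeed a theorem for computable Polish spaces, so the appeal is legitimate, though you are right that this is where the substance of the cited result lies: completeness enters a second time there, e.g.\ via a K\"onig-lemma argument on formally consistent chains of closed balls which certifies that the enveloping set $K'=\bigcap_n\bigcup_{i\le k_n}\overline{B}(s_i,2^{-n+1})$ is computably compact, after which the c.e.\ complement of $K$ reduces enumerating the rational covers of $K$ to enumerating those of $K'$ (namely, $K\subseteq J$ iff $K'\subseteq J\cup(X\setminus K)$).
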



This result was originally proven for computable probability measures on $X$ by Hoyrup and Rojas (Theorem 2, \cite{HR09c}) and can be easily extended to any computable measure in $\M(X)$. By induction on $n$ and the fact that computably compact sets are closed under finite union, we get the following immediate corollary.

\begin{corollary}\label{cor:finite}
    If $\Gamma\subseteq\M(X)$ is finite and each $\mu\in\Gamma$ is computable, then $\Gamma$ is effectively tight.
\end{corollary}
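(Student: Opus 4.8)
The plan is to induct on the cardinality $m = |\Gamma|$, taking the base case $m = 1$ directly from Proposition \ref{prop:finite} and handling the inductive step by merging the witnessing compact sets. For the step, suppose every family of $m$ computable measures is effectively tight, and let $|\Gamma| = m+1$. I would decompose $\Gamma = \Gamma' \cup \{\mu^*\}$ with $|\Gamma'| = m$. On input $N \in \N$, the inductive hypothesis supplies (via an effective procedure) an index of a computably compact $K'$ with $\mu(X \setminus K') < 2^{-N}$ for all $\mu \in \Gamma'$, while Proposition \ref{prop:finite} supplies an index of a computably compact $K''$ with $\mu^*(X \setminus K'') < 2^{-N}$. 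The candidate witness is then $K = K' \cup K''$.

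Two points need checking. The measure-theoretic one is immediate: since $K \supseteq K'$ and $K \supseteq K''$, we have $X \setminus K \subseteq X \setminus K'$ and $X \setminus K \subseteq X \setminus K''$, so monotonicity gives $\mu(X \setminus K) \leq \mu(X \setminus K') < 2^{-N}$ for each $\mu \in \Gamma'$ and $\mu^*(X \setminus K) \leq \mu^*(X \setminus K'') < 2^{-N}$. Crucially, the error bound $2^{-N}$ is preserved with no need to shrink it, precisely because the union contains each of the two witnessing sets. The computational point is that $K$ is computably compact with an index computable uniformly from those of $K'$ and $K''$: given a minimal-cover name of $K'$ (an enumeration of indices $j$ with $J_j \supseteq K'$) and one of $K''$, I would dovetail over pairs $(j', j'')$ and, for each, compute the index $k$ with $J_k = J_{j'} \cup J_{j''}$ (effected by taking $\Delta_k = \Delta_{j'} \cup \Delta_{j''}$). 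Each $J_k$ is a rational open set containing $K' \cup K'' = K$, and this enumeration is a name of $K$; this is exactly the sense in which computably compact sets are effectively closed under finite union.

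The main obstacle is only mild bookkeeping: verifying that the merged enumeration really is a minimal-cover name of the union (soundness is clear, and completeness follows because any rational open $J \supseteq K$ already contains both $K'$ and $K''$, hence appears as $J \cup J$) and ensuring every index manipulation is uniform in $N$, so that the whole construction collapses into a single effective procedure meeting the definition of effective tightness. Everything else is formal, so I would keep the write-up short, stating the induction and the union-of-names construction and invoking monotonicity for the bound.
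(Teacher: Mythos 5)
Your proof is correct and follows essentially the same route as the paper, which disposes of the corollary in one line: induction on the size of $\Gamma$, Proposition \ref{prop:finite} for the base case, and closure of computably compact sets under finite union for the step. Your write-up merely makes explicit what the paper leaves implicit (the merged minimal-cover name via unions $J_{j'}\cup J_{j''}$ and the uniformity in $N$), and those details check out.
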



As shown below, a necessary and sufficient condition for effective tightness of a computable sequence of measures in $\M(X)$ is to effectively concentrate all but finitely many measures in the sequence.

\begin{proposition}\label{prop:tight}
    Let $\{\mu_n\}_{n\in\N}$ be a computable sequence of measures. The following are equivalent.
    \begin{enumerate}
        \item $\{\mu_n\}_{n\in\N}$ is effectively tight.
        \item There is an effective procedure that, on input $N\in\N$, computes an index of a computably compact subset $K$ of $X$ and an $n_0\in\N$ such that $\mu_n(X\setminus K)<2^{-N}$ for all $n\geq n_0$. 
    \end{enumerate}
\end{proposition}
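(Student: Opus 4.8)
The plan is to prove the two implications separately, with $(2) \Rightarrow (1)$ carrying essentially all the content. The implication $(1) \Rightarrow (2)$ is immediate: the procedure witnessing effective tightness already returns, on input $N$, an index of a computably compact $K$ with $\mu_n(X \setminus K) < 2^{-N}$ for \emph{every} $n$, so taking $n_0 = 0$ witnesses $(2)$.

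For $(2) \Rightarrow (1)$ I would proceed as follows. Fix $N \in \N$, and first run the procedure from $(2)$ on input $N$ to obtain an index of a computably compact set $K_0$ together with a threshold $n_0 \in \N$ such that $\mu_n(X \setminus K_0) < 2^{-N}$ for all $n \geq n_0$. This controls the tail of the sequence but leaves the finitely many measures $\mu_0, \dots, \mu_{n_0 - 1}$ unaccounted for. To handle these I would invoke the effective tightness of finite families. Since $\{\mu_n\}_{n \in \N}$ is a computable sequence, each $\mu_i$ is computable uniformly in $i$, so by (the uniform form of) Proposition \ref{prop:finite} I can compute, from $N$ and each $i < n_0$, an index of a computably compact set $K_i$ with $\mu_i(X \setminus K_i) < 2^{-N}$. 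Setting $K_1 = \bigcup_{i < n_0} K_i$ and using the closure of computably compact sets under finite union, I obtain an index of a computably compact $K_1$ with $\mu_i(X \setminus K_1) < 2^{-N}$ for every $i < n_0$; because $n_0$ is an explicitly computed value, the union is over a known finite index set.

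Finally, I would set $K = K_0 \cup K_1$, again computably compact by closure under finite union, and verify the bound. Since $X \setminus K = (X \setminus K_0) \cap (X \setminus K_1)$ is contained in both $X \setminus K_0$ and $X \setminus K_1$, monotonicity of $\mu_n$ gives $\mu_n(X \setminus K) \leq \mu_n(X \setminus K_0) < 2^{-N}$ for $n \geq n_0$ and $\mu_n(X \setminus K) \leq \mu_n(X \setminus K_1) < 2^{-N}$ for $n < n_0$. Hence $\mu_n(X \setminus K) < 2^{-N}$ for all $n$, and an index of $K$ has been computed uniformly from $N$, witnessing $(1)$.

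The only delicate point, and the step I expect to require the most care, is \emph{uniformity}: I must ensure that an index of $K_1$ (equivalently, of each $K_i$ and of their union) can genuinely be computed once $n_0$ is produced. This rests on the uniform computability of the sequence $\{\mu_n\}_{n \in \N}$ and on the fact that the procedure underlying Proposition \ref{prop:finite} acts uniformly in an index of the given measure. Since $n_0$ is itself output by the procedure of $(2)$, the finite union defining $K_1$ ranges over a known interval, so no unbounded search is needed and the construction is fully effective.
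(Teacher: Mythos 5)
Your proof is correct and follows essentially the same route as the paper: the paper also dispatches $(1)\Rightarrow(2)$ as trivial, covers the tail $n\geq n_0$ with the set from $(2)$, covers the finitely many remaining measures via effective tightness of finite families (Corollary \ref{cor:finite}, which is exactly the uniform application of Proposition \ref{prop:finite} plus closure of computably compact sets under finite union that you spell out), and takes the union of the two computably compact sets. Your explicit attention to uniformity in $n_0$ and in the indices of the $\mu_i$ is a point the paper leaves implicit, but it is the same argument.
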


\begin{proof}
    It suffices to prove the converse. Suppose there is an effective procedure that, on input $N\in\N$, computes an index of a computably compact subset $K$ of $X$ and an $n_0$ such that $\mu_n(X\setminus K)<2^{-N}$ for all $n\geq n_0$. By Corollary \ref*{cor:finite}, it is possible to compute, on input $N\in\N$, an index of a computably compact subset $C$ of $X$ such that $\mu_k(X\setminus C)<2^{-N}$ for all $0\leq k<n_0$. On input $N\in\N$, compute an index of $K\cup C$. Since $K$ and $C$ are computably compact, so is $K\cup C$. Furthermore, $\mu_n(X\setminus(K\cup C))<2^{-N}$ for all $n$.
\end{proof}

An \emph{effective tail-tightness witness} for a computable sequence $\{\mu_n\}_{n\in\N}$ in $\M(X)$ is a computable sequence $\{(K_N,n_N)\}_{N\in\N}$ of pairs consisting of acomputably compact set (or finite union of rational closed balls) $K_N$ together with an index $n_N$ such that $\mu_n(X\setminus K_N)<2^{-N}$ for all $n\geq n_N$. Thus, Proposition \ref{prop:tight} states that a computable sequence $\{\mu_n\}_{n\in\N}$ in $\M(X)$ admits an effective tightness witness if and only if it admits an effective tail-tightness witness.

\section{Effective Weak Convergence of Measures}

Having established an effective notion of tightness for families of measures in $\M(X)$, we now proceed to establish an effective framework for weak convergence of measures on computable Polish spaces. As in \cite{MR23}, we provide both a non-uniform and a uniform version.

\begin{definition}
    Let $\{\mu_n\}_{n\in\N}$ be a sequence of measures in $\M(X)$. We say $\{\mu_n\}_{n\in\N}$ \emph{effectively weakly converges to} $\mu$ if for every $f\in C^c_b(X)$, $\lim_n\int_Xfd\mu_n=\int_Xfd\mu$ and it is possible to compute an index of a modulus of convergence of $\{\int_Xfd\mu_n\}_{n\in\N}$ from an index of $f$ and a bound $B\in\N$ on $|f|$.
\end{definition}

\begin{definition}
    Let $\{\mu_n\}_{n\in\N}$ be a sequence of measures in $\M(X)$. We say $\{\mu_n\}_{n\in\N}$ \emph{uniformly effectively weakly converges to} $\mu$ if $\{\mu_n\}$ weakly converges to $\mu$, and it is possible to compute a modulus of convergence of $\{\int_Xfd\mu_n\}_{n\in\N}$ from a name of $f\in C_b(X)$ and a bound $B\in\N$ on $|f|$.
\end{definition}

\subsection{Equivalence of Effective Weak Convergence Notions}

The main goal is to show that, under our definition of effective tightness, the two definitions are equivalent.

\begin{theorem}\label{thm:gen.ewc}
    Let $\{\mu_n\}_{n\in\N}$ be a computable sequence in $\M(X)$. The following are equivalent.
    \begin{enumerate}
        \item $\{\mu_n\}_{n\in\N}$ effectively weakly converges to $\mu$.
        \item $\{\mu_n\}_{n\in\N}$ uniformly effectively weakly converges to $\mu$.
    \end{enumerate}
\end{theorem}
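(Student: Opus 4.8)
The plan is to dispatch the implication $(2)\Rightarrow(1)$ immediately and to concentrate on $(1)\Rightarrow(2)$, for which the essential tool is the effective tightness of $\{\mu_n\}_{n\in\N}$ (the standing hypothesis under which this subsection operates). For $(2)\Rightarrow(1)$, note that $C_b^c(X)\subseteq C_b(X)$ and that from an index of a computable $f\in C_b^c(X)$ one can produce a name of $f$; feeding this name together with the bound $B$ to the uniform procedure of $(2)$ returns a modulus of convergence of $\{\int_X f\,d\mu_n\}_{n\in\N}$ as a computable object, i.e.\ an index of such a modulus, and the required limit holds since $(2)$ asserts weak convergence. So the content lies entirely in upgrading the modulus computation from computable, compactly supported test functions to an \emph{arbitrary} $f\in C_b(X)$ presented only by a name.

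For $(1)\Rightarrow(2)$, fix a name of $f\in C_b(X)$, a bound $B$ with $|f|\le B$, and a target $N$. First I would invoke effective tightness to compute, from a sufficiently small $\epsilon$ to be fixed below, an index of a computably compact $K\subseteq X$ with $\mu_n(X\setminus K)<\epsilon$ for every $n$. Using the minimal-cover name of $K$ together with the name of $f$, I would then compute a modulus of uniform continuity for $f$ on $K$ (effective Heine--Cantor: a continuous function is effectively uniformly continuous on a computably compact set, relative to its name), sample $f$ to high precision at finitely many rational points of a fine cover of $K$, and assemble from rational ``tent'' functions $x\mapsto\max(0,1-d(x,s_i)/\delta)$ a computable, compactly supported $h$ with an explicit index and bound such that $|h|\le B+1$ and $\sup_{x\in K}|f(x)-h(x)|<\epsilon$. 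I would arrange $h$ to be supported in a fixed computably compact neighborhood of $K$.

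The estimate then proceeds by the triangle inequality. Splitting over $K$ and $X\setminus K$ gives $\bigl|\int_X f\,d\mu_n-\int_X h\,d\mu_n\bigr|<\epsilon\,\mu_n(X)+(2B+1)\epsilon$ for every $n$, and the analogous bound holds for $\mu$ once I control $\mu(X\setminus\supp h)$; the latter I would obtain from $(1)$ applied to a computable bump $\psi$ that equals $1$ on $K$, since $\int_X\psi\,d\mu=\lim_n\int_X\psi\,d\mu_n$ forces $\mu$ to concentrate near $K$ up to $O(\epsilon)$ (for probability measures this also pins $\mu(X)=1$; in general the same device shows $\mu_n(X)\to\mu(X)$). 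Since $h\in C_b^c(X)$ is computable with a known index and bound, $(1)$ yields an $n_0$, computable from these data, beyond which $\bigl|\int_X h\,d\mu_n-\int_X h\,d\mu\bigr|<2^{-N}/3$. Choosing $\epsilon$ so that each truncation error is below $2^{-N}/3$, the three pieces combine to $\bigl|\int_X f\,d\mu_n-\int_X f\,d\mu\bigr|<2^{-N}$ for all $n\ge n_0$. As every step is uniform in the name of $f$, in $B$, and in $N$, the assignment $N\mapsto n_0$ is a modulus of convergence computable from a name of $f$ and $B$, which is exactly $(2)$.

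I expect the main obstacle to be the construction of the approximant $h$ uniformly from a mere name of $f$: turning the minimal-cover name of $K$ into a genuine modulus of uniform continuity for the possibly noncomputable $f$, and then certifying that the resulting $h$ is computable with a computable index and bound, so that $(1)$ can legitimately be applied to it. Controlling the tail $\mu(X\setminus\supp h)$ of the limit measure $\mu$, whose computability is not assumed, is the secondary delicate point, and I would handle it through convergence on bump functions as above rather than through any tightness assumption on $\mu$ itself. The remaining triangle-inequality bookkeeping is routine.
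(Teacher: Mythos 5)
There is a genuine gap at the very first step of your $(1)\Rightarrow(2)$ argument: you invoke the effective tightness of $\{\mu_n\}_{n\in\N}$ as ``the standing hypothesis under which this subsection operates,'' but Theorem \ref{thm:gen.ewc} carries no such hypothesis---it is stated for an arbitrary computable sequence in $\M(X)$. In the paper, effective tightness is not assumed; it is \emph{derived} from hypothesis (1), and that derivation (Proposition \ref{prop:ewc2et}) is the main technical content of the equivalence. Concretely, one must show: (i) the limit measure $\mu$ is computable, which follows from (1) together with the computability of the $\mu_n$ via Proposition \ref{prop:int}; (ii) the singleton $\{\mu\}$ is effectively tight, which is the Hoyrup--Rojas result recorded as Proposition \ref{prop:finite} and is where completeness of $X$ enters; and (iii) the computably compact sets $K_m$ concentrating $\mu$ can be transferred to all but finitely many of the $\mu_n$, by applying the modulus from (1) to computable bump functions sandwiched between the indicators of $X\setminus K_{m+1}$ and $X\setminus K_m$, after which Proposition \ref{prop:tight} (via Corollary \ref{cor:finite}) handles the finitely many remaining indices. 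Without this chain there is no way to produce, effectively in $N$, a single computably compact $K$ with $\mu_n(X\setminus K)<\epsilon$ for \emph{every} $n$, which is exactly what your construction of the approximant $h$ and your error estimates require. As written, your proof establishes only the weaker implication ``effective tightness $+$ (1) $\Rightarrow$ (2),'' not the theorem.

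Once tightness is granted, the remainder of your argument is essentially the paper's own (Lemma \ref{lem:caf} plus the concluding triangle inequality): restrict to a computably compact $K$, approximate the named $f$ on $K$ by a computable function with controlled sup-error (the paper uses that $C(K)$ is a computable Polish space; your route via effective Heine--Cantor and rational tent functions is a reasonable alternative), control the tail of $\mu$ by applying (1) to a computable bump equal to $1$ on $K$ (the paper's Lemma \ref{lem:caf} does this with $g(x)=d(x,C)/(d(x,C)+d(x,K))$, and your remark that $\mu_n(X)\to\mu(X)$ follows from applying (1) to the constant function $1$ is correct since constants lie in $C_b^c(X)$), then apply (1) to the computable approximant and combine the three error terms. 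So your proposal contains the second half of the proof but omits the first half; to repair it, prove Proposition \ref{prop:ewc2et} before using tightness, rather than positing tightness as a hypothesis.
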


First, we need the following proposition.

\begin{proposition}\label{prop:ewc2et}
    Every effectively weakly convergent computable sequence of measures is effectively tight.
\end{proposition}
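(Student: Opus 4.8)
The plan is to first extract computability of the limit measure and effective convergence of the total masses, and then to build the required compacta scale by scale, treating finitely many measures at a time with the tightness of finite families and the tail of the sequence with the limit measure.

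First I would observe that the limit measure $\mu$ is itself computable. Since $\{\mu_n\}_{n\in\N}$ is uniformly computable, $\int_X f\,d\mu_n$ is computable uniformly from an index of $f\in C_b^c(X)$ and a bound $B$; composing this with the computable modulus of convergence supplied by effective weak convergence shows that $f\mapsto\int_X f\,d\mu$ is computable from an index of $f$ and a bound $B$, so $\mu$ is computable by Proposition \ref{prop:int}. Hence $\{\mu\}$ and every finite subfamily of $\{\mu_n\}_{n\in\N}$ are effectively tight by Proposition \ref{prop:finite} and Corollary \ref{cor:finite}. A crucial point is that $C_b^c(X)$ consists of \emph{all} bounded computable functions, so the constant function $\one\equiv1$ belongs to it; applying effective weak convergence to $\one$ yields $\mu_n(X)\to\mu(X)$ together with a computable modulus. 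This effective control of the total masses rules out escaping mass and is what makes the tail estimate below go through.

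Next, fix $N$ and set $\epsilon_k=2^{-(N+k+1)}$ and $\rho_k=2^{-k}$ for $k\ge1$, so that $\sum_{k\ge1}\epsilon_k<2^{-N}$. For each $k$ I would produce a finite union $U_k$ of rational balls of radius $\rho_k$ with $\mu_n(X\setminus U_k)<\epsilon_k$ for all $n$, splitting into a tail and a head. For the tail, use effective tightness of $\mu$ to compute a computably compact $L_k$ with $\mu(X\setminus L_k)<\epsilon_k/4$, and from its cover name extract a finite set of rational centers $F_k$ with $L_k\subseteq\bigcup_{i\in F_k}B(s_i,\rho_k/4)$; put $U_k'=\bigcup_{i\in F_k}B(s_i,\rho_k)$. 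The function $f_k(x)=\max_{i\in F_k}\max\!\left(0,\min\!\left(1,\,2-\tfrac{4}{\rho_k}\,d(x,s_i)\right)\right)$ is in $C_b^c(X)$, being a finite combination of distances to rational points, and satisfies $\one_{L_k}\le f_k\le\one_{U_k'}$, whence $\int_X f_k\,d\mu\ge\mu(L_k)>\mu(X)-\epsilon_k/4$ and $\int_X f_k\,d\mu_n\le\mu_n(U_k')$. Using the computable moduli for $\int_X f_k\,d\mu_n\to\int_X f_k\,d\mu$ and for $\mu_n(X)\to\mu(X)$, I would compute an $n_k$ so that, for all $n\ge n_k$, $\int_X f_k\,d\mu_n>\mu(X)-\epsilon_k/2$ and $\mu_n(X)<\mu(X)+\epsilon_k/2$; then $\mu_n(X\setminus U_k')\le\mu_n(X)-\int_X f_k\,d\mu_n<\epsilon_k$. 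For the head, the finite family $\{\mu_0,\dots,\mu_{n_k-1}\}$ is effectively tight by Corollary \ref{cor:finite}, so I would compute a computably compact $C_k$ with $\mu_j(X\setminus C_k)<\epsilon_k$ for $j<n_k$ and cover it by finitely many radius-$\rho_k$ rational balls forming $U_k''\supseteq C_k$. Setting $U_k=U_k'\cup U_k''$ then gives $\mu_n(X\setminus U_k)<\epsilon_k$ for every $n$.

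Finally I would output $K=\bigcap_{k\ge1}\overline{U_k}$. As $K\subseteq\overline{U_k}$ is covered by finitely many closed balls of radius $\rho_k$ for each $k$, it is totally bounded, and being closed in the complete space $X$ it is compact; moreover $\mu_n(X\setminus K)\le\sum_k\mu_n(X\setminus\overline{U_k})\le\sum_k\mu_n(X\setminus U_k)<\sum_k\epsilon_k<2^{-N}$ for all $n$. The main obstacle I anticipate is not the analytic estimate but exhibiting $K$ as a genuinely \emph{computably} compact set: because $X$ need not be locally compact, closures of balls are not compact, which is exactly what forces the multiscale intersection above. To furnish a minimal cover name for $K$, I would use that each finite intersection $\bigcap_{k\le m}\overline{U_k}$ is computably compact and that these sets decrease to $K$, so for every rational open set $J_j$ one has $K\subseteq J_j$ iff $\bigcap_{k\le m}\overline{U_k}\subseteq J_j$ for some $m$, and each such containment of a computably compact set in a $\Sigma^0_1$ set is semidecidable; enumerating the witnessing $j$ yields a computable name, with an index obtained uniformly from $N$. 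Verifying this enumeration is the technical heart of the argument, while the remaining steps are routine given the effective moduli and the tightness of finite families.
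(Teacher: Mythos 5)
Your measure-theoretic estimates are correct (including the useful observation that the constant function $\one$ controls the total masses, which matters here since the measures are finite rather than probability measures), and $K=\bigcap_{k}\overline{U_k}$ is indeed compact with $\mu_n(X\setminus K)<2^{-N}$ for all $n$. The gap is precisely in the step you flag as the technical heart: producing a computable minimal cover name for $K$. Your procedure rests on the claim that each finite intersection $F_m=\bigcap_{k\le m}\overline{U_k}$ is computably compact, together with the equivalence that $K\subseteq J_j$ iff $F_m\subseteq J_j$ for some $m$. But $F_m$ is a finite intersection of finite unions of closed balls, and in a Polish space that is not locally compact (infinite-dimensional $\ell^2$, Baire space $\N^{\N}$) such a set is closed and bounded but in general not compact --- exactly the obstruction you yourself point out one sentence earlier. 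Without compactness of the $F_m$, both halves of your argument collapse: semidecidability of $F_m\subseteq J_j$ is no longer available, and the equivalence itself is false for decreasing closed sets. For instance, in $\ell^2$ the sets $F_m=\{x\in\overline{B}(0,1): x_1=\cdots=x_m=0\}$ decrease to $\{0\}\subseteq B(0,1/2)$, yet $F_m\not\subseteq B(0,1/2)$ for every $m$ because $e_{m+1}\in F_m$. So your enumeration can fail to list rational open sets that do cover $K$, and the output is not a valid name. Filling this hole amounts to proving that an intersection of finite unions of balls with radii shrinking to $0$ is computably compact; that is a genuine theorem --- essentially the content of the Hoyrup--Rojas result quoted as Proposition~\ref{prop:finite} --- whose proof uses completeness in an essential way, not a routine enumeration argument.

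Note that the paper's proof avoids this issue entirely by never manufacturing a new compact set out of infinitely many constraints. It applies Proposition~\ref{prop:finite} to the computable limit $\mu$ to obtain computably compact sets $K_m$ with small $\mu$-complement, builds a single Urysohn-type function $g$ sandwiched so that $\int_X g\,d\mu$ is small while $\mu_n(X\setminus K_{m_1})\le\int_X g\,d\mu_n$, and applies the effective weak-convergence modulus to this one integrand to compute $n_0$ with $\mu_n(X\setminus K_{m_1})<2^{-N}$ for all $n\ge n_0$; the finitely many remaining measures are handled by Corollary~\ref{cor:finite}, and Proposition~\ref{prop:tight} combines the two pieces via a finite union of computably compact sets, an operation under which computable compactness is closed. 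If you want to keep your head/tail architecture, the repair is the same in spirit: arrange the tail estimate to bound $\mu_n$ of the complement of an already-certified computably compact set (one of the sets supplied by Proposition~\ref{prop:finite} for $\mu$), rather than of the complement of the open neighborhood $U_k'$, so that no new compactness certificate ever needs to be constructed.
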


\begin{proof}
    Let $\{\mu_n\}_{n\in\N}$ be a computable sequence in $\M(X)$ that effectively weakly converges to $\mu$. By assumption, $\mu$ is computable. Thus, by Proposition \ref*{prop:finite}, it is possible to compute an increasing sequence $\{K_m\}_{m\in\N}$ of computably compact subsets of $X$ such that $\mu(X\setminus K_m)<2^{-m}$ uniformly in $m$. From $\{K_m\}_{m\in\N}$, compute a sequence $\{J_m\}_{m\in\N}$ such that $J_m$ is a rational open cover of $K_m$ uniformly in $m$. Let $C_m=X\setminus J_m$ for each $m\in\N$. Then, $\{C_m\}_{m\in\N}$ is a uniformly computable sequence of computably closed subsets of $X$ such that for each $m\in\N$, $C_m\cap K_m=\emptyset$. For each $m$, define the function $f_m:X\rightarrow[0,1]$ given by
    \[
    f_m(x)=\dfrac{d(x,K_m)}{d(x,K_m)+d(x,C_m)}.
    \]
    Then, $\{f_m\}_{m\in\N}$ is a uniformly computable sequence of functions such that for all $m\in\N$, $f_m(x)=0$ if $x\in K_m$, $f_m(x)=1$ if $x\in C_m$, and $0<f_m(x)<1$ otherwise. 
    
    Now, for each $m\in\N$, define the function $g_m:X\rightarrow[0,1]$ by 
    \[
        g_m(x)=\min\{1,\max\{f_m(x),f_{m+1}(x)\}\}.
    \] 
    Then, $\{g_m\}_{m\in\N}$ is a computable sequence such that $\one_{X\setminus K_{m+1}}\leq g_m\leq\one_{X\setminus K_m}$ for all $m\in\N$. It follows that for all $\nu\in\M(X)$ and $m\in\N$,
    \[
    \nu(X\setminus K_{m+1})\leq\int_{X}g_m \ d\mu\leq\nu(X\setminus K_m).
    \]
    Now, fix $N\in\N$. We first search for $m_0\geq N$ so that $\int_Xg_{m_0}\ d\mu<2^{-N}$. Since $\mu$ is finite, it follows that this search must terminate. Since $\mu$ is computable, this search is effective. Set $m_1=m_0+1$. Since $\{\mu_n\}_{n\in\N}$ effectively weakly converges to $\mu$, we can now compute an $n_0\in\N$ so that $\int_Xg_{m_0}\ d\mu_n<2^{-N}$ for all $n\geq n_0$. Thus, $\mu_n(X\setminus K_{m_1})<2^{-N}$ for all $n\geq n_0$. By Proposition \ref*{prop:tight}, it follows that $\{\mu_n\}_{n\in\N}$ is effectively tight. 
\end{proof}

\begin{lemma}\label{lem:caf}
    Let $\{\mu_n\}_{n\in\N}$ be a computable sequence in $\M(X)$ that effectively weakly converges to $\mu$. From an index of $f\in C_b^c(X)$ and a bound $B$ on $|f|$, it is possible to compute an index of a computably compact set $K\subseteq X$, an $n_0\in\N$, and a function $\phee\in C_b(X)$ such that $\supp\phee=K$, $|\int_X(f-\phee)d\mu|<2^{-N}$, and $|\int_X(f-\phee)d\mu_n|<2^{-N}$ for all $n\geq n_0$.
\end{lemma}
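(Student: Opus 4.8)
The plan is to exploit effective tightness to confine the mass of every measure in the sequence, together with that of the limit, to a single computably compact set, and then take $\phee$ to be simply the restriction of $f$ to that set. Since $\{\mu_n\}_{n\in\N}$ effectively weakly converges to $\mu$, Proposition \ref{prop:ewc2et} tells us the sequence is effectively tight; and since $\mu$ is computable, Proposition \ref{prop:finite} handles the limit measure. I read $N$ as a further input governing the target precision $2^{-N}$ in the conclusion, since it appears there but is omitted from the list of inputs.

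First I would fix, effectively from $B$ and $N$, a natural number $M$ with $B\cdot 2^{-M}<2^{-N}$; since $B\in\N$ this is a finite computation. Invoking effective tightness of $\{\mu_n\}_{n\in\N}$ in the form of Proposition \ref{prop:tight}(2), I would compute an index of a computably compact $K_1\subseteq X$ and an $n_0\in\N$ with $\mu_n(X\setminus K_1)<2^{-M}$ for all $n\geq n_0$. By Proposition \ref{prop:finite} I would likewise compute an index of a computably compact $K_2\subseteq X$ with $\mu(X\setminus K_2)<2^{-M}$. Setting $K=K_1\cup K_2$ gives a computably compact set (computable compactness is preserved under finite unions) for which $\mu_n(X\setminus K)<2^{-M}$ for all $n\geq n_0$ and $\mu(X\setminus K)<2^{-M}$. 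I would then let $\phee=f|_K\in C(K)$; since $f$ is computable on $X$ and $K$ is computably compact, an index of $\phee$ can be computed from the given index of $f$ and the index of $K$.

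Then I would verify the two integral bounds. Reading $\phee$ as extended by $0$ on $X\setminus K$, the difference $f-\phee$ vanishes on the closed set $K$ and equals $f$ on $X\setminus K$, so that $\int_X(f-\phee)\,d\nu=\int_{X\setminus K}f\,d\nu$ for any $\nu\in\M(X)$. Hence $\left|\int_X(f-\phee)\,d\mu\right|\leq B\,\mu(X\setminus K)<B\cdot 2^{-M}<2^{-N}$, and for $n\geq n_0$ the same estimate with $\mu_n$ in place of $\mu$ gives $\left|\int_X(f-\phee)\,d\mu_n\right|<2^{-N}$, as required.

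The computation is short, so the points requiring care are conceptual rather than combinatorial. The first is the interpretation of $\int_X(f-\phee)\,d\mu$ when $\phee$ lives only on $K$: extending by $0$ is harmless here precisely because $f-\phee$ is identically $0$ on all of the closed set $K$ (the boundary $\partial K\subseteq K$ included), so no $\mu$-continuity hypothesis on $K$ is needed and there is no boundary term to control. The second, which I expect to be the only genuinely technical step, is verifying that the restriction $f|_K$ carries a computable name uniformly in the indices of $f$ and $K$; this follows from the standard fact that restricting a computable function to a computably compact subspace is effective, but it is the place where the computable-analysis bookkeeping actually has to be carried out.
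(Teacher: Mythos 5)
Your argument is valid for the statement exactly as worded, but it is not the paper's argument, and the difference is substantive. Because the statement hands you an \emph{index} of $f$, you may take $\phee=f|_K$, and then the lemma collapses to effective tightness (Proposition \ref{prop:ewc2et}): the error is supported on $X\setminus K$ and is bounded by $B\,\nu(X\setminus K)$. The paper instead works from a \emph{name} of $f\in C_b(X)$ (its proof reads ``Fix $f\in C_b(X)$\ldots From a name of $f$, it is possible to compute a name of $f|_K$''), and takes $\phee$ to be a \emph{computable} point of the computable Polish space $C(K)$ lying within a prescribed sup-distance of $f|_K$. That extra step is the entire point of the lemma: it converts a function that is merely continuous, given by an oracle name and possibly having no index at all, into an indexed computable function whose integrals against $\mu$ and the tail of $\{\mu_n\}$ approximate those of $f$. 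This stronger form is what the proof of Theorem \ref{thm:gen.ewc} consumes: there the lemma is applied to $f$ given only by a name $\rho$, and the resulting $\phee$ must have an index so that effective weak convergence (which furnishes moduli only from indices of computable functions) can be applied to $\phee$ to produce $n_2$ with $|\int_X\phee\,d\mu_n-\int_X\phee\,d\mu|<2^{-(N+1)}$. With your choice $\phee=f|_K$ this downstream step breaks: from a name of $f$ one can compute only a name of $f|_K$, which need not be computable; and in the regime where $f$ genuinely has an index, the lemma is not needed at all, since the definition of effective weak convergence already yields the desired modulus directly.

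A secondary point of comparison: once $\phee$ is only an approximation of $f|_K$ rather than the restriction itself, the error term $\int_K|f-\phee|\,d\nu\leq\epsilon\,\nu(K)$ must be paid on $K$, and since the $\mu_n$ are finite measures (not assumed to be probability measures) one needs a computable upper bound on $\mu_n(K)$, uniformly for large $n$, in order to choose $\epsilon$. That is exactly the role of the auxiliary function $g$ and the threshold $n_1$ in the paper's proof; your argument never meets this issue because your error vanishes identically on $K$. In summary: as a proof of the literal statement your argument is correct and considerably shorter, but it establishes a weaker fact than the paper's proof does, and that weaker fact cannot support Theorem \ref{thm:gen.ewc}. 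To keep your simplification, you would have to restate the lemma with name input, at which point the approximation step through $C(K)$ (or something equivalent to it) becomes unavoidable.
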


\begin{proof}
    By Proposition \ref{prop:finite}, it is possible to compute an index of a computably compact $K_1\subseteq X$ such that $\mu(X\setminus K_1)<B^{-1}2^{-(N+2)}$ on input $N$. By Proposition \ref*{prop:ewc2et}, it is possible to compute an index of a computably compact $K_2\subseteq X$ and an index $n_0$ such that $\mu(X\setminus K_2)<B^{-1}2^{-(N+2)}$ and $\mu_n(X\setminus K)<B^{-1}2^{-(N+2)}$ for all $n\geq n_0$ on input $N$. 
    Let $K=K_1\cup K_2$. 
    Let $J$ be a rational open cover of $K$, and let $C=X\setminus J$. 
    Define the function $g:X\rightarrow[0,1]$ by
    \[
    g(x)=\dfrac{d(x,C)}{d(x,C)+d(x,K)}.
    \]
    Then, $g$ is a bounded computable function since both $C$ and $K$ are computably closed. 
    Since $\{\mu_n\}_{n\in\N}$ effectively weakly converges to $\mu$, it is possible to compute an index $n_1\geq n_0$ such that 
    \[
        \left|\int_Xg\ d\mu_n-\int_Xg\ d\mu\right|<1
    \]
    for all $n\geq n_1$. 

    Fix $f\in C_b(X)$, $B\geq|f|$, and $N\in\N$. Let $\rho$ be a name of $f$ and $\kappa$ a name of $K$. From $\rho$, $B$, and $\kappa$, it is possible to compute a modulus of uniform continuity $\omega$ for $f$ on $K$. Then, from $\rho$, $\kappa$, and $\omega$, it is possible to compute a name $\rho_K$ of $f|_K$. 
    Now, let 
    \[
    \epsilon=\dfrac{2^{-(N+1)}}{1+\int_Xg\ d\mu}.
    \] 
    Let $N_0$ be least such that $d(x,y)<2^{-\omega(N_0)}$ implies $d(f(x),f(y))<\epsilon$ for all $x,y\in K$. 
    Since $K$ is computably compact, we can compute from $\kappa$ a finite list of points $\{x_0,\ldots,x_M\}\subseteq K$ with 
    \[
    K\subseteq\bigcup_{i=0}^MB(x_i,2^{-\omega(0)}).
    \]
    Let $\phee:K\rightarrow\R$ be given by $\phee(x)=\max\{-B,\min\{f(x_i),B\}\}$ whenever $x\in B(x_i,2^{-\omega(0)})$. By definition, $\phee$ is a computable function such that $|\phee|\leq B$. Furthermore, for all $x\in K$, we can find $0\leq i\leq M$ such that
    \[
    |f(x)-\phee(x)|=|f(x)-\max\{-B,\min\{f(x_i),B\}\}|\leq|f(x)-f(x_i)|<\epsilon.
    \]
    Thus, $\max\{|f(x)-\phee(x)|:x\in K\}<\epsilon$

    Let $n\in\N$, and suppose $n\geq n_1$. By construction of $g$, $\mu(K)<1+\int_Xgd\mu$. However, since $n\geq n_1$,
    \begin{align*}
        \mu_n(K)&\leq\int_Xg\ d\mu_n\\
        &\leq\left|\int_Xg\ d\mu_n-\int_Xg\ d\mu\right|+\int_Xg\ d\mu\\
        &<1+\int_Xg\ d\mu.
    \end{align*}
    If $\nu\in\M(X)$, then
    \begin{align*}
        \left|\int_X(f-\phee)d\nu\right|&\leq\int_K|f-\phee|d\nu+\int_{X\setminus K}|f-\phee|d\nu\\
        &\leq\dfrac{2^{-(N+1)}}{1+\int_Xg\ d\mu}\nu(K)+2B\nu(X\setminus K).
    \end{align*}
    Thus,
    \begin{align*}
        \left|\int_X(f-\phee)d\mu\right|&\leq\dfrac{2^{-(N+1)}}{1+\int_Xg\ d\mu}\mu(K)+2B\mu(X\setminus K)\\
        &<\dfrac{2^{-(N+1)}}{1+\int_Xg\ d\mu}\left(1+\int_Xg\ d\mu\right)+2B\left(\dfrac{2^{-(N+1)}}{2B}\right)\\
        &=2^{-N}.
    \end{align*}
    Furthermore, for all $n\geq n_1$,
    \begin{align*}
        \left|\int_X(f-\phee)d\mu_n\right|&\leq\dfrac{2^{-(N+1)}}{1+\int_Xg\ d\mu}\mu_n(K)+2B\mu_n(X\setminus K)\\
        &<\dfrac{2^{-(N+1)}}{1+\int_Xg\ d\mu}\left(1+\int_Xg\ d\mu\right)+2B\left(\dfrac{2^{-(N+1)}}{2B}\right)\\
        &=2^{-N}.
    \end{align*}
\end{proof}

\begin{proof}[Proof of Theorem \ref*{thm:gen.ewc}]
    It is possible to compute a name of $f\in C_b(X)$ from an index of $f$. It follows that every uniformly effectively weakly convergent sequence effectively weakly converges.

    Now, suppose $\{\mu_n\}_{n\in\N}$ effectively weakly converges to $\mu$. Let $B\in\N$, and suppose $\rho$ is a name of $f\in C_b(X)$ with $|f|\leq B$. 
    Fix $N\in\N$. We construct a function $G:\N\rightarrow\N$ as follows. By means of Lemma \ref{lem:caf}, it is possible to compute an index of a computably compact subset $K$ of $X$, an index $n_1$, and an index of a bounded computable function $\phee$ supported on $K$ such that 
    \[
    \left|\int_X(f-\phee)d\mu\right|<2^{-(N+2)} \ \text{and} \ \left|\int_X(f-\phee)d\mu_n\right|<2^{-(N+2)}
    \] 
    for all $n\geq n_1$. Since $\{\mu_n\}_{n\in\N}$ effectively weakly converges to $\mu$, it is possible to compute an index $n_2\geq n_1$ such that $|\int_X\phee\ d\mu_n-\int_X\phee\ d\mu|<2^{-(N+1)}$ for all $n\geq n_2$. Set $G(N)=n_2$.

    Suppose $n\geq G(N)$. Then,
    \begin{align*}
        \left|\int_Xf\ d\mu_n-\int_Xf\ d\mu\right|&\leq\left|\int_X(f-\phee) d\mu_n\right|+\left|\int_X\phee\ d\mu_n-\int_X\phee\ d\mu\right|+\left|\int_X(\phee-f)d\mu\right|\\
        &<2^{-(N+2)}+2^{-(N+1)}+2^{-(N+2)}\\
        &=2^{-N}.
    \end{align*}
    Thus, $G$ is a modulus of convergence for $\{\int_Xf\ d\mu_n\}_{n\in\N}$. Since the construction of $G$ from $\rho$ and $B$ is uniform, it follows that $\{\mu_n\}_{n\in\N}$ uniformly effectively weakly converges to $\mu$.
\end{proof}

In \cite{MR23}, we witnessed for the first time the equivalence between non-uniform and uniform effective notions of weak convergence for the case $X=\R$. Theorem \ref{thm:gen.ewc} shows that this equivalence occurs in all computable Polish spaces. We also obtain the following corollary.

\begin{corollary}\label{cor:ewc2wc}
    If a computable sequence in $\M(X)$ effectively weakly converges, then it weakly converges.
\end{corollary}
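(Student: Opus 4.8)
The corollary asserts that effective weak convergence implies classical weak convergence. The plan is to reduce the problem to the classical Portmanteau Theorem (Theorem \ref{thm:cpt}) by showing that the defining condition for weak convergence holds for every bounded uniformly continuous function, which is condition (2) of that theorem. First I would observe that effective weak convergence already guarantees, for each $f\in C_b^c(X)$, that $\lim_n\int_Xfd\mu_n=\int_Xfd\mu$; the computability of the modulus is extra information we may discard for the purely classical conclusion. The task is therefore to upgrade convergence of integrals from compactly-supported computable test functions to arbitrary bounded uniformly continuous test functions.

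The key tool is Lemma \ref{lem:caf}, which lets us approximate an arbitrary bounded (uniformly) continuous $f$ by a function agreeing with it on a large computably compact set $K$ carrying almost all of the mass of $\mu$ and of the tail measures $\mu_n$ simultaneously. Concretely, given $f\in C_b(X)$ with $|f|\le B$ and a target error $2^{-N}$, the lemma produces a computably compact $K$, an index $n_0$, and a function $\phee\in C(K)$ so that $\left|\int_X(f-\phee)d\mu\right|<2^{-N}$ and $\left|\int_X(f-\phee)d\mu_n\right|<2^{-N}$ for all $n\ge n_0$. I would then split the difference of integrals in exactly the manner of the proof of Theorem \ref{thm:gen.ewc}:
\[
\left|\int_Xf\,d\mu_n-\int_Xf\,d\mu\right|\le\left|\int_X(f-\phee)d\mu_n\right|+\left|\int_X\phee\,d\mu_n-\int_X\phee\,d\mu\right|+\left|\int_X(\phee-f)d\mu\right|.
\]
The first and third terms are controlled by the lemma, and the middle term tends to zero because $\phee$ (suitably extended or treated as an element of $C_b^c(X)$) falls under the hypothesis of effective weak convergence, hence its integrals converge. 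Taking $N$ large and $n$ beyond the relevant threshold shows $\lim_n\int_Xf\,d\mu_n=\int_Xf\,d\mu$ for every bounded uniformly continuous $f$.

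Having established convergence of integrals for all uniformly continuous $f\in C_b(X)$, I would invoke the equivalence of conditions (1) and (2) in the classical Portmanteau Theorem (Theorem \ref{thm:cpt}) to conclude that $\{\mu_n\}_{n\in\N}$ weakly converges to $\mu$. The main obstacle I anticipate is a bookkeeping one rather than a conceptual one: ensuring that the approximant $\phee$ produced by Lemma \ref{lem:caf} is handled correctly as a test function for the effective-weak-convergence hypothesis. The lemma states $\phee\in C(K)$, so one must argue that the middle integral $\left|\int_X\phee\,d\mu_n-\int_X\phee\,d\mu\right|$ genuinely converges to zero — either by noting that $\phee$ extends to a compactly-supported continuous function on $X$ to which the convergence hypothesis applies, or by reading off this convergence directly from the construction in the proof of Theorem \ref{thm:gen.ewc}, where precisely this step is carried out. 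Since that proof already executes the three-term split and drives the middle term to zero, the cleanest route is to note that the entire argument of Theorem \ref{thm:gen.ewc} in fact proves ordinary convergence $\lim_n\int_Xf\,d\mu_n=\int_Xf\,d\mu$ for every bounded uniformly continuous $f$ as a byproduct, so the corollary follows immediately from that theorem together with Theorem \ref{thm:cpt}.
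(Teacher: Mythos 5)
Your argument is correct, but it takes a slightly longer route than the paper needs. The paper offers no separate proof of this corollary because it is immediate from Theorem \ref{thm:gen.ewc}: statement (2) there is \emph{uniform} effective weak convergence, and the definition of that notion explicitly includes ``$\{\mu_n\}$ weakly converges to $\mu$'' as a conjunct. The reason the definitional conjunct is available is precisely the relativization you describe: the proof of Theorem \ref{thm:gen.ewc} constructs the modulus from an arbitrary \emph{name} $\rho$ of $f\in C_b(X)$, and every bounded continuous function on a computable Polish space has a name relative to some oracle, so convergence $\lim_n\int_X f\,d\mu_n=\int_X f\,d\mu$ holds for \emph{all} of $C_b(X)$, not merely the uniformly continuous members. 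Consequently your final step --- restricting to uniformly continuous test functions and invoking the equivalence $(1)\Leftrightarrow(2)$ of the classical Portmanteau Theorem \ref{thm:cpt} --- is sound but redundant: once the relativized argument is in hand, weak convergence is already the literal content of the theorem's conclusion, with no Portmanteau upgrade required. Your approach does buy a slightly weaker intermediate claim (convergence on uniformly continuous functions only), which is aesthetically closer to the Portmanteau structure, at the cost of an extra citation. One point of care, which you flag yourself and which the paper shares: Lemma \ref{lem:caf} is \emph{stated} for an index of $f\in C_b^c(X)$, so applying it to a noncomputable $f$ requires the name-based (relativized) form; that is legitimate here because the lemma's proof, like that of Theorem \ref{thm:gen.ewc}, in fact operates on names, but it is worth saying explicitly rather than leaving implicit.
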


Note that the completeness assumption on $X$ is necessary because, otherwise, we could construct measures on $X$ that are not (effectively) tight.

\subsection{Effective Portmanteau Theorem}

We say that $\{\mu_n\}_{n\in\N}$ \emph{(uniformly) effectively weakly converges to} $\mu$ \emph{on} $\C\subseteq C_b(X)$ if we replace $C_b(X)$ with $\C$ in the definition of (uniform) effective weak convergence.

We now proceed to prove an effective version of Theorem \ref{thm:cpt}.

\begin{theorem}[Effective Portmanteau Theorem]\label{thm:ept1}
    Let $\{\mu_n\}_{n\in\N}$ be a uniformly computable sequence in $\M(X)$, and let $\mu \in \M(X)$. The following are equivalent.
        \begin{enumerate}
               \item $\{\mu_n\}_{n\in\N}$ effectively weakly converges to $\mu$.\label{thm:ept1::ewc}
            
                \item $\{\mu_n\}_{n \in \N}$ effectively weakly converges to $\mu$ on uniformly continuous functions in $C_b(X)$.  \label{thm:ept1::uc}
        
                \item $\mu$ is computable, and from an index of $C \in \Pi^0_1(X)$ it is possible to compute an index of a 
            witness that $\limsup_n \mu_n(C)$ is not larger than $\mu(C)$.\label{thm:ept1::clsd}
            
                \item	$\mu$ is computable, and from an index of $U \in \Sigma^0_1(X)$ it is possible to compute an index of 
            a witness that $\liminf_n \mu_n(U)$ is not smaller than $\mu(U)$.\label{thm:ept1::opn}
            
                \item $\mu$ is computable, and for every $\mu$-almost decidable $A$, $\lim_n \mu_n(A) = \mu(A)$ and an index of a modulus of convergence of $\{\mu_n(A)\}_{n \in \N}$ can be computed from a $\mu$-almost decidable index of $A$.\label{thm:ept1::a.d.}
        \end{enumerate}
\end{theorem}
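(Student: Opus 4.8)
The plan is to effectivize the classical cycle of implications underlying Theorem \ref{thm:cpt}, carrying along the appropriate witness/modulus data at each step, via
\[
\text{(1)}\Rightarrow\text{(2)}\Rightarrow\text{(3)}\Rightarrow\text{(4)}\Rightarrow\text{(5)}\Rightarrow\text{(1)}.
\]
A recurring ingredient is $\lim_n\mu_n(X)=\mu(X)$ with a computable modulus; this is what makes the closed-set and open-set formulations interchangeable. It is automatic for probability measures, and it is supplied by conditions \eqref{thm:ept1::ewc}, \eqref{thm:ept1::uc}, and \eqref{thm:ept1::a.d.} by applying the relevant hypothesis to the constant function $\one$ or to the clopen set $X$ (which is $\mu$-almost decidable via the pair $(X,\emptyset)$); I treat it as available throughout. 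The computability of $\mu$ demanded in \eqref{thm:ept1::clsd}--\eqref{thm:ept1::a.d.} is obtained once from Proposition \ref{prop:int}: under \eqref{thm:ept1::ewc} or \eqref{thm:ept1::uc} we can compute $\int_X f\,d\mu=\lim_n\int_X f\,d\mu_n$ (a computable limit, with computable modulus, of the computable reals $\int_X f\,d\mu_n$) for every uniformly continuous computable $f$ with $0\le f\le 1$, which is exactly the characterization of computability of $\mu$ in Proposition \ref{prop:int}.

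The implication \eqref{thm:ept1::ewc}$\Rightarrow$\eqref{thm:ept1::uc} is trivial, since uniformly continuous bounded computable functions form a subclass of $C_b^c(X)$. For \eqref{thm:ept1::uc}$\Rightarrow$\eqref{thm:ept1::clsd} and \eqref{thm:ept1::uc}$\Rightarrow$\eqref{thm:ept1::opn}, I approximate indicators of $\Pi^0_1$ and $\Sigma^0_1$ sets by \emph{computable} uniformly continuous functions. The subtlety is that the distance to the complement of a $\Sigma^0_1$ set is only lower semicomputable, so I build test functions from distances to rational centers instead: for a rational ball $B(a,\rho)$ the function $x\mapsto\max\{0,\min\{1,\eta^{-1}(\rho-d(x,a))\}\}$ is computable and uniformly continuous, and taking maxima over the finitely many balls in an initial segment of an index of $U$ yields computable uniformly continuous $g_m\uparrow\one_U$ with $g_m\le\one_U$; dually $1-g_m\downarrow\one_C$ for $C=X\setminus U$. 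From \eqref{thm:ept1::uc} I then build the witnesses: given a rational $r<\mu(U)$ I search for $m$ with $\int_X g_m\,d\mu>r$ (possible since $\int_X g_m\,d\mu$ is computable and increases to $\mu(U)$), and use the modulus of $\int_X g_m\,d\mu_n\to\int_X g_m\,d\mu$ to find $N$ with $\mu_n(U)\ge\int_X g_m\,d\mu_n>r$ for all $n\ge N$; this defines a liminf-witness, and the limsup-witness for closed sets is dual. The step \eqref{thm:ept1::clsd}$\Rightarrow$\eqref{thm:ept1::opn} is complementation, converting the limsup-witness for $X\setminus U$ into a liminf-witness for $U$ through $\mu_n(U)=\mu_n(X)-\mu_n(X\setminus U)$ and the mass modulus. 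For \eqref{thm:ept1::opn}$\Rightarrow$\eqref{thm:ept1::a.d.}, given a $\mu$-almost decidable $A$ with pair $(U,V)$, I use $U\subseteq A\subseteq X\setminus V$ and $\mu(U)=\mu(A)=\mu(X\setminus V)$: \eqref{thm:ept1::opn} applied to $U$ gives a liminf-witness that also witnesses $\liminf_n\mu_n(A)\ge\mu(A)$, while \eqref{thm:ept1::opn} applied to $V$ together with mass convergence gives a limsup-witness for $X\setminus V$, hence for $A$; Proposition \ref{prop:wit.2.mod} then yields $\lim_n\mu_n(A)=\mu(A)$ with a computable modulus, uniformly in a $\mu$-almost decidable index of $A$.

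The crux is \eqref{thm:ept1::a.d.}$\Rightarrow$\eqref{thm:ept1::ewc}, and this is where I expect the main difficulty. After splitting $f=f^+-f^-$ and rescaling by the bound $B$, it suffices to produce a uniform modulus for $\int_X f\,d\mu_n\to\int_X f\,d\mu$ when $0\le f\le1$. Writing $U_j=\{f>j/k\}$ (a $\Sigma^0_1$ set, uniformly in $j,k$) and using the layer-cake estimate
\[
\tfrac{1}{k}\sum_{j=1}^{k}\nu(U_j)\le\int_X f\,d\nu\le\tfrac{1}{k}\sum_{j=1}^{k}\nu(U_j)+\tfrac{1}{k}\nu(X),
\]
I reduce the problem to controlling $\nu(U_j)$ for $\nu\in\{\mu\}\cup\{\mu_n\}_n$. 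Since the $U_j$ need not be $\mu$-almost decidable, I sandwich them: applying Theorem \ref{thm:apprx} to the $\Pi^0_1$ set $\{f\ge j/k\}\supseteq U_j$ produces a $\mu$-almost decidable $A_j^+\supseteq U_j$ with $\mu(A_j^+)<\mu(U_j)+\mu(\{f=j/k\})+\delta$, and applying it to the $\Pi^0_1$ set $\{f\le j/k\}$ and complementing (the complement of a $\mu$-almost decidable set is $\mu$-almost decidable) produces a $\mu$-almost decidable $A_j^-\subseteq U_j$ with $\mu(A_j^-)>\mu(U_j)-\delta$. Crucially, I need no control over $\mu_n(A_j^\pm)-\mu_n(U_j)$ individually; the inclusions $A_j^-\subseteq U_j\subseteq A_j^+$ give
\[
\tfrac1k\sum_j\mu_n(A_j^-)\le\int_X f\,d\mu_n\le\tfrac1k\sum_j\mu_n(A_j^+)+\tfrac1k\mu_n(X),
\]
and both outer sums are finite sums of $\mu_n$-measures of $\mu$-almost decidable sets, so by \eqref{thm:ept1::a.d.} each converges to the corresponding $\mu$-sum with a computable modulus. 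Because $\sum_j\mu(\{f=j/k\})\le\mu(X)$ while $\delta$ and $1/k$ can be driven to $0$, both $\tfrac1k\sum_j\mu(A_j^\pm)$ converge to $\int_X f\,d\mu$, and assembling the finitely many moduli (with the modulus for $\mu_n(X)\to\mu(X)$) yields the desired uniform modulus.

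The main obstacle is thus the two-sided almost-decidable sandwiching in \eqref{thm:ept1::a.d.}$\Rightarrow$\eqref{thm:ept1::ewc}: one approximates each open superlevel set from inside and outside by $\mu$-almost decidable sets using only Theorem \ref{thm:apprx}, which controls the $\mu$-error but says nothing about the $\mu_n$-error, absorbs the atomic contributions $\mu(\{f=j/k\})$ into the $\tfrac1k\mu(X)$ slack, and then combines the $2k$ individual moduli from \eqref{thm:ept1::a.d.} into a single modulus. Tracking every error term so that the final modulus is computable and uniform in $N$, the index of $f$, and $B$ is the delicate bookkeeping the argument must get right.
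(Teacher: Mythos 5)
Your plan tracks the paper only partially: steps (\ref{thm:ept1::ewc})$\Rightarrow$(\ref{thm:ept1::uc}) and (\ref{thm:ept1::uc})$\Rightarrow$(\ref{thm:ept1::clsd}),(\ref{thm:ept1::opn}) are essentially sound (they parallel the paper's (\ref{thm:ept1::ewc})$\Rightarrow$(\ref{thm:ept1::opn}), with your hand-built Lipschitz approximations replacing the cited ones), but your crux step (\ref{thm:ept1::a.d.})$\Rightarrow$(\ref{thm:ept1::ewc}) has a genuine gap, and it is not bookkeeping. Theorem \ref{thm:apprx} takes the rational $r>\mu(C)$ as an \emph{input}; it provides no way to find such an $r$ close to $\mu(C)$. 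To obtain $\mu(A_j^+)<\mu(\{f\ge j/k\})+\delta$ you must effectively produce a rational in the interval $(\mu(\{f\ge j/k\}),\,\mu(\{f\ge j/k\})+\delta)$, but $\mu(\{f\ge j/k\})$ is in general only right-c.e. If you could do this for every $\delta$ (uniformly, as your modulus assembly requires), then $\mu(\{f\ge j/k\})$ would be computable, since almost decidable sets have computable $\mu$-measure and $\mu(\{f\ge j/k\})\le\mu(A_j^+)<\mu(\{f\ge j/k\})+\delta$; symmetrically your inner sets $A_j^-$ would force $\mu(\{f>j/k\})$ to be computable. That fails: e.g.\ on $[0,1]$ with Lebesgue measure $\lambda$, a computable $f$ whose strict superlevel set $\{f>1/2\}$ is a union of dyadic intervals indexed by an incomputable c.e.\ set (with bump heights decaying in the enumeration stage) has $\lambda(\{f>1/2\})$ left-c.e.\ and incomputable. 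This is precisely the obstruction the paper's witness formalism dodges: in its step (\ref{thm:ept1::a.d.})$\Rightarrow$(\ref{thm:ept1::clsd}), the rational $r$ is already enumerated into the right cut of $\mu(C)$ \emph{before} Theorem \ref{thm:apprx} is invoked, and the climb from sets back to integrals is done via Lemma \ref{lm:mod.int} (layer-cake plus semicomputability, Lemma \ref{lm:sc.U}) and Proposition \ref{prop:wit.2.mod}, never by approximating a right-c.e.\ quantity. Your sandwich is repairable, but it needs a further idea, e.g.: since $\int_Xf\,d\mu$ and $\mu(X)$ are computable and $\sum_{j=1}^k\mu(\{f\ge j/k\})\le k\int_Xf\,d\mu+\mu(X)$, dovetail the enumerations of the right cuts of all $k$ level measures until the current upper bounds $r_j$ satisfy $\sum_jr_j<k\int_Xf\,d\mu+\mu(X)+1$, so that the $r_j$ are good \emph{on average}, which is all the Riemann sum needs.

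There is a second, structural gap. You correctly note that exchanging the open- and closed-set forms needs $\lim_n\mu_n(X)=\mu(X)$ with a computable modulus, and that this is supplied by (\ref{thm:ept1::ewc}), (\ref{thm:ept1::uc}), (\ref{thm:ept1::a.d.}) but not by (\ref{thm:ept1::clsd}) or (\ref{thm:ept1::opn}), each of which yields only one-sided mass control. Yet your links (\ref{thm:ept1::clsd})$\Rightarrow$(\ref{thm:ept1::opn}) and (\ref{thm:ept1::opn})$\Rightarrow$(\ref{thm:ept1::a.d.}) have exactly (\ref{thm:ept1::clsd}), resp.\ (\ref{thm:ept1::opn}), as their hypotheses, so ``treating it as available throughout'' is circular at those two links. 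For measures of unequal mass the implications are genuinely false: $\mu_n=2\delta_0$, $\mu=\delta_0$ satisfies (\ref{thm:ept1::opn}) with computable witnesses (as $\mu_n(U)=2\mu(U)\ge\mu(U)$) but violates (\ref{thm:ept1::a.d.}), since $\mu_n(A)\to 2\mu(A)$ for any almost decidable $A$ containing $0$. To be fair, the paper's own one-line claim that (\ref{thm:ept1::clsd})$\Leftrightarrow$(\ref{thm:ept1::opn}) ``by considering complements'' has the same defect, so this is partly a flaw of the statement itself, which is only correct as read for probability measures (or with mass convergence built into (\ref{thm:ept1::clsd}) and (\ref{thm:ept1::opn})); but having explicitly flagged the issue, you needed to resolve it structurally---restrict to probability measures, or prove (\ref{thm:ept1::uc})$\Rightarrow$(\ref{thm:ept1::clsd})$\wedge$(\ref{thm:ept1::opn}) (which you essentially do) and then supply exits from (\ref{thm:ept1::clsd}) and (\ref{thm:ept1::opn}) that do not consume the mass modulus---rather than assume it where no hypothesis provides it.
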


When $f \in C_b(X)$ and $t \in\R$, let $U^f_t = \{f > t\}$, and let $\ol{U}^f_t = \{f \geq t\}$.  By a standard argument with Tonelli's Theorem, if $0 \leq f \leq 1$, then 
\[
\int_X f\ d\nu = \int_0^1 \nu(U^f_t)\ dt = \int_0^1 \nu(\ol{U}^f_t)\ dt
\]
whenever $\nu \in \M(X)$.

To prove Theorem \ref{thm:ept1}, we will need the following lemmas. The proofs of both these lemmas can be found in \cite{MR23}.

\begin{lemma}\label{lm:sc.U}
Let $f\in C^c_b(X)$ satisfy the condition $0<f<1$.  Fix a computable $\nu \in \M(X)$.  
\begin{enumerate}
    \item The function $t \mapsto \nu(U^f_t)$ is lower-semicomputable uniformly in indices of $f$ and $\nu$. \label{lm:sc.U::lsc}
    \item The function $t \mapsto \nu(\ol{U}^f_t)$ is upper-semicomputable uniformly in indices of $f$ and $\nu$.
    \label{lm:sc.U::usc}
\end{enumerate}
\end{lemma}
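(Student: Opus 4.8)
The plan is to reduce both claims to the computability of $\nu$ by exhibiting the relevant level sets of $f$ as uniformly effectively open sets. For part (\ref{lm:sc.U::lsc}) I would present $U^f_t=\{f>t\}$ as a $\Sigma^0_1$ set whose index is computed uniformly from a Cauchy name of $t$ and an index of $f$, and then invoke the defining property of a computable measure — that $\nu(U)$ is left-c.e. uniformly in a $\Sigma^0_1$-index of $U$ — to conclude that $\nu(U^f_t)$ is left-c.e. uniformly in a name of $t$. This is precisely lower semicomputability of $t\mapsto\nu(U^f_t)$. Part (\ref{lm:sc.U::usc}) is handled dually, by writing the complement of $\ol{U}^f_t$ as a uniformly $\Sigma^0_1$ set and using that $\nu(X)$ is a computable real.

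For part (\ref{lm:sc.U::lsc}), I would first record the standard fact that a computable (indeed merely lower-semicomputable) $f$ has uniformly $\Sigma^0_1$ strict superlevel sets: from an index of $f$ one can compute, uniformly in a rational $q$, a $\Sigma^0_1$-index of $\{f>q\}$. I then use the identity
\[
U^f_t=\{f>t\}=\bigcup_{q\in\Q,\,q>t}\{f>q\}.
\]
Given a Cauchy name $\{s_n\}_{n\in\N}$ of $t$, the set of rationals $q>t$ is c.e. uniformly in the name, since $q>t$ iff $s_n+2^{-n}<q$ for some $n$. Hence $U^f_t$ is a union of a uniformly enumerable family of $\Sigma^0_1$ sets, so it is $\Sigma^0_1$ uniformly in a name of $t$ and an index of $f$. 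Feeding this index into the computability of $\nu$ enumerates the left Dedekind cut of $\nu(U^f_t)$, as required.

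For part (\ref{lm:sc.U::usc}), I observe that
\[
X\setminus\ol{U}^f_t=\{f<t\}=\bigcup_{q\in\Q,\,q<t}\{f<q\}.
\]
Since $f$ is also upper-semicomputable, $\{f<q\}$ is $\Sigma^0_1$ uniformly in $q$, and $\{q\in\Q:q<t\}$ is c.e. uniformly in a name of $t$ (namely $q<t$ iff $s_n-2^{-n}>q$ for some $n$), so $\{f<t\}$ is $\Sigma^0_1$ uniformly. Thus $\nu(\{f<t\})$ is left-c.e. uniformly in a name of $t$, and because $\nu(X)$ is a computable real,
\[
\nu(\ol{U}^f_t)=\nu(X)-\nu(\{f<t\})
\]
is right-c.e. uniformly in a name of $t$; equivalently, $t\mapsto\nu(\ol{U}^f_t)$ is upper-semicomputable.

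The only delicate points are the uniformities, and I expect the crux to be threading them through cleanly rather than any conceptual difficulty. Concretely, one must verify (i) that from a name of $t$ one can enumerate the rationals strictly above (resp. below) $t$, and (ii) that from an index of a computable $f$ one can produce $\Sigma^0_1$-indices for $\{f>q\}$ and $\{f<q\}$ uniformly in $q$; both facts are elementary but must be composed so that the final indices remain computable from a name of $t$ and indices of $f$ and $\nu$. I would also remark that the hypotheses $0<f<1$ and compact support play no role in these semicomputability claims — they are needed only for the surrounding Tonelli identity relating $\int_X f\,d\nu$ to $\int_0^1\nu(U^f_t)\,dt$ — so the argument uses nothing beyond the computability of $f$ and of $\nu$.
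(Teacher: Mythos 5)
Your proof is correct and is essentially the intended argument: the paper itself does not prove this lemma (it defers to McNicholl and Rojas \cite{MR23}), and the proof there proceeds just as you do, by exhibiting $\{f>t\}$ (resp.\ $\{f<t\}$) as a $\Sigma^0_1$ set uniformly in a Cauchy name of $t$ and an index of $f$, and then invoking the computability of $\nu$ (plus computability of $\nu(X)$ for the upper-semicomputable half). The one point worth spelling out is that the paper's definition of measure computability is phrased for c.e.\ indices of open sets, whereas your $\Sigma^0_1$ index of $U^f_t$ is computed relative to the name of $t$ as an oracle; this is harmless because the definition relativizes --- given any enumeration of basic balls with union $U$, one enumerates the left cut of $\nu(U)$ as the union of the left cuts of $\nu$ of finite subunions, by continuity from below --- but that is exactly what your phrase ``feeding this index into the computability of $\nu$'' is silently using.
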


\begin{lemma}\label{lm:mod.int}
Let $\{\mu_n\}_{n\in\N}$ be a computable sequence in $\M(X)$ that weakly converges to a computable measure $\mu$. Furthermore, let $f\in C_b^c(X)$ satisfy the condition $0<f<1$.
\begin{enumerate}
    \item Suppose that from an index of $U \in \Sigma^0_1(X)$ it is possible to compute an index of a witness that 
    $\liminf_n \mu_n(U)$ is not smaller than $\mu(U)$.  Then, there is a computable witness that 
    $\liminf_n \int_X f\ d\mu_n$ is not smaller than $\int_X f\ d\mu$.
    \label{lm:mod.int::lower}
    
    \item Suppose that from an index of $C \in \Pi^0_1(X)$ it is possible to compute an index of a witness that 
    $\limsup_n \mu_n(C)$ is not larger than $\mu(C)$.  Then, there is a computable witness that 
    $\limsup_n \int_X f\ d\mu_n$ is not larger than $\int_X f\ d\mu$.
    \label{lm:mod.int::upper}
\end{enumerate}
\end{lemma}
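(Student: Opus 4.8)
The plan is to pass to the layer-cake representation
\[
\int_X f\ d\nu = \int_0^1 \nu(U^f_t)\ dt = \int_0^1 \nu(\ol{U}^f_t)\ dt
\]
recalled just above, and to effectivize the usual Fatou-type argument by replacing the integral over $t$ with a finite Riemann sum over rational levels. The point is that each super-level set $U^f_t = \{f > t\}$ is $\Sigma^0_1$ and each $\ol{U}^f_t = \{f \geq t\}$ is $\Pi^0_1$, with indices computable uniformly in the rational $t$, so the hypotheses of the lemma apply level by level. Throughout I will use that $\int_X f\ d\mu$ is computable (Proposition \ref{prop:int}) and that $t \mapsto \mu(U^f_t)$ is lower-semicomputable while $t \mapsto \mu(\ol{U}^f_t)$ is upper-semicomputable (Lemma \ref{lm:sc.U}); both are monotonically non-increasing in $t$.

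For part (\ref{lm:mod.int::lower}), I must produce a computable $g$ whose domain is the left Dedekind cut of $\int_X f\ d\mu$ and with $r < \int_X f\ d\mu_n$ for $n \geq g(r)$. Given a rational $r$, I dovetail a search for a finite partition $0 = t_0 < t_1 < \cdots < t_k \leq 1$ of rationals and rationals $c_1, \ldots, c_k$ such that $c_i < \mu(U^f_{t_i})$ for each $i$ (confirmable by lower-semicomputability) and $\sum_{i=1}^k c_i (t_i - t_{i-1}) > r$. Because $t \mapsto \mu(U^f_t)$ is non-increasing, the right-endpoint sums $\sum_i \mu(U^f_{t_i})(t_i - t_{i-1})$ are lower Riemann sums whose supremum is $\int_0^1 \mu(U^f_t)\ dt = \int_X f\ d\mu$; hence such a staircase exists precisely when $r < \int_X f\ d\mu$, so the search halts exactly on the left cut and $\dom(g)$ is correct. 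For each $i$ I then compute an index of $U^f_{t_i} \in \Sigma^0_1(X)$ and, by hypothesis, a witness $g_i$ that $\liminf_n \mu_n(U^f_{t_i})$ is not smaller than $\mu(U^f_{t_i})$; since $c_i$ lies in the left cut of $\mu(U^f_{t_i})$, the value $g_i(c_i)$ is defined and $c_i < \mu_n(U^f_{t_i})$ for $n \geq g_i(c_i)$. Setting $g(r) = \max_i g_i(c_i)$ and using monotonicity once more to get $\int_X f\ d\mu_n \geq \sum_i \mu_n(U^f_{t_i})(t_i - t_{i-1})$, I obtain $\int_X f\ d\mu_n > \sum_i c_i(t_i - t_{i-1}) > r$ for all $n \geq g(r)$, as required.

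Part (\ref{lm:mod.int::upper}) is dual. Here I build $g$ on the right Dedekind cut of $\int_X f\ d\mu$: given a rational $r$, I search for rationals $0 = t_0 < \cdots < t_k = 1$ and rationals $d_i > \mu(\ol{U}^f_{t_{i-1}})$ (confirmable by upper-semicomputability) with $\sum_i d_i(t_i - t_{i-1}) < r$, which exist iff $r > \int_X f\ d\mu$ because the left-endpoint sums are now upper Riemann sums of the non-increasing function $t \mapsto \mu(\ol{U}^f_t)$ with infimum equal to the integral. Using the $\Pi^0_1$ hypothesis on each $\ol{U}^f_{t_{i-1}}$ to get witnesses $g_i$ for $\limsup_n \mu_n(\ol{U}^f_{t_{i-1}})$, I set $g(r) = \max_i g_i(d_i)$ and conclude $\int_X f\ d\mu_n \leq \sum_i \mu_n(\ol{U}^f_{t_{i-1}})(t_i - t_{i-1}) < \sum_i d_i(t_i - t_{i-1}) < r$ for $n \geq g(r)$. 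I expect the main obstacle to be bookkeeping rather than conceptual: one must align the monotonicity of the super-level measures with the correct (right- versus left-endpoint) Riemann sum so that the finite approximation sits on the correct side of the integral, and one must check that the staircase search halts on exactly the relevant Dedekind cut so that $\dom(g)$ is correct, turning the classical Fatou interchange into a genuinely computable witness.
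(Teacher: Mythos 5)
Your proposal is correct and follows essentially the same route as the paper's proof (which is deferred to \cite{MR23} but is precisely the argument the paper's setup points to): the layer-cake identity $\int_X f\, d\nu = \int_0^1 \nu(U^f_t)\, dt = \int_0^1 \nu(\ol{U}^f_t)\, dt$, the semicomputability facts of Lemma \ref{lm:sc.U}, and finite rational Riemann sums whose level sets are fed to the hypothesized witnesses. The bookkeeping you flag (right-endpoint sums below the integral for the $\Sigma^0_1$ case, left-endpoint sums above it for the $\Pi^0_1$ case, and halting of the staircase search exactly on the appropriate Dedekind cut) is handled correctly.
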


\begin{proof}[Proof of Theorem \ref{thm:ept1}]
By Theorem \ref{thm:gen.ewc}, (\ref{thm:ept1::ewc}) implies (\ref{thm:ept1::uc}).  The equivalence of (\ref{thm:ept1::clsd}) and 
(\ref{thm:ept1::opn}) follows by considering complements.\\

(\ref{thm:ept1::uc}) $\Rightarrow$ (\ref{thm:ept1::ewc}): Assume (\ref{thm:ept1::uc}) holds. 
We first show that $\mu$ is computable. By (\ref{thm:ept1::uc}), $\{\mu_n\}_{n \in \N}$ effectively weakly converges
to $\mu$ on uniformly continuous $f \in C(X)$ with $0 \leq f \leq 1$.  Since $\{\mu_n\}_{n \in \N}$ is uniformly computable, it follows 
that condition of Proposition \ref{prop:int} holds. Hence, $\mu$ is computable.

Since $\mu$ is computable, it is possible to compute, on input $N\in\N$, an index of a computable compact $K\subseteq X$ such that $\mu(X\setminus K)<2^{-N}$. Let $\C$ be the class of computable functions in $C(K)$. By the proof of Theorem \ref{thm:gen.ewc}, it suffices to show that $\{\mu_n\}_{n \in \N}$ effectively weakly converges to $\mu$ on $\C$. As every function in $\C$ is a uniformly continuous function in $C_b(X)$, the result follows.

(\ref{thm:ept1::ewc})$\Rightarrow$ (\ref{thm:ept1::opn}): Assume (\ref{thm:ept1::ewc}) holds. 
Then, $\mu$ is computable.   

Let $U \in \Sigma^0_1(X)$.  
We construct a function $g$ as follows.  
Given $r \in \Q$, first wait until $r$ is enumerated into the left Dedekind cut of $\mu(U)$.  
By means of Proposition C.7 of \cite{G05}, we can now compute a non-decreasing sequence $\{t_k\}_{k \in \N}$ of Lipschitz functions so that $0 \leq t_k \leq 1$ and so that $\lim_k t_k = \one_U$.  
 By the Monotone Convergence Theorem, $\lim_k \int_X t_k\ d\mu = \mu(U)$.  Search for $k_0$ so that $\int_X t_{k_0}\ d\mu > r$.  We then compute $N_0, n_0 \in \N$ so that 
$r + 2^{-N_0} < \int_X t_{k_0}\ d\mu$ and 
$|\int_X t_{k_0}\ d\mu - \int_X t_{k_0}\ d\mu_n| < 2^{-N_0}$ when $n \geq n_0$.  Set $g(r) = n_0$.  Thus, when 
$n \geq g(r)$, $\mu_n(U) \geq \int_X t_{k_0}\ d\mu_n > r$.  Therefore, $g$ witnesses that $\liminf_n \mu_n(U)$ is not smaller than $\mu(U)$.

Finally, we note that the construction of $g$ is uniform in that an index of $g$ can be computed from an index of $U$.  Thus, (\ref{thm:ept1::opn}).\\

(\ref{thm:ept1::opn}) $\Rightarrow$ (\ref{thm:ept1::ewc}): Suppose (\ref{thm:ept1::opn}).  
Thus, (\ref{thm:ept1::clsd}).   

Fix $f \in C^c_b(X)$, and suppose $B \in \N$ is an upper bound on $|f|$.  Set 
\[
h = \frac{f + B + 1}{2(B+1)}.
\]
Thus, $0 < h < 1$.  

Let $a_n = \int_X h\ d\mu_n$.  
Let $a = \int_X h\ d\mu$.  
By Lemma \ref{lm:mod.int}, there is a computable witness that 
$\liminf_n a_n$ is not smaller than $a$, and there is a computable witness that 
$\limsup_n a_n$ is not larger than $a$.  Thus, by Proposition \ref{prop:wit.2.mod}, $\lim_n a_n = a$ and 
$\{a_n\}_{n \in \N}$ has a computable modulus of convergence.  It follows that 
$\{\int_{X} f\ d\mu_n\}_{n \in \N}$ has a computable modulus of convergence.

The argument just given is uniform, and so we conclude $\{\mu_n\}_{n \in \N}$ effectively weakly converges to $\mu$.\\ 

(\ref{thm:ept1::opn}) $\Rightarrow$ (\ref{thm:ept1::a.d.}): Suppose (\ref{thm:ept1::opn}).   Thus, (\ref{thm:ept1::clsd}). 

Suppose $A$ is $\mu$-almost decidable.  Let $(U,V)$ be a $\mu$-almost decidable pair for $A$.  
Thus, $\mu(U) = \mu(A) = \mu(X \setminus V)$.  

By (\ref{thm:ept1::opn}), there is a computable witness that $\liminf_n \mu_n(U)$ is not smaller than $\mu(A)$; let $g_1$ be such a witness.  By (\ref{thm:ept1::clsd}), there is also a computable witness that $\limsup_n \mu_n(X \setminus V)$ is not larger than $\mu(A)$; let $g_2$ be such a witness.

Since $\mu_n(U) \leq \mu_n(A) \leq \mu_n(X \setminus V)$, $g_1$ is also a witness that $\liminf_n \mu_n(A)$ is not smaller than $\mu(A)$, and $g_2$ is also a witness that $\limsup_n \mu_n(A)$ is not larger than $\mu(A)$.  
So, by Proposition \ref*{prop:wit.2.mod}, $\{\mu_n(A)\}_{n \in A}$ has a computable modulus of convergence $g$.   

The argument just given is uniform in that an index of $g$ can be computed from an index of $A$.  Hence, (\ref{thm:ept1::a.d.}). \\

(\ref{thm:ept1::a.d.}) $\Rightarrow$ (\ref{thm:ept1::clsd}): Suppose (\ref{thm:ept1::a.d.}).  Thus, $\mu$ is computable.  
Let $C \in \Pi^0_1(X)$.  We construct a function $g$ as follows. 
Let $r\in\Q$. 
Wait until $r$ is enumerated into the right Dedekind cut of $\mu(C)$. 
By Theorem \ref{thm:apprx}, we can then compute an index of a $\mu$-almost decidable 
set $B \supseteq C$ so that $\mu(B) < r$.
Then, compute an $N_0 \in \N$ so that $2^{-N_0} < r- \mu(B)$. 
By (\ref{thm:ept1::a.d.}), we can now compute an $n_0 \in \N$ so that $|\mu_n(B)-\mu(B)|<2^{-N_0}$ when $n\geq n_0$. 
Set $g(r)=n_0$.
Thus, when $n\geq g(r)$, $\mu_n(B)<r$ and so $\mu_n(C) \leq \mu(B)<r$. 
It follows that $g$ is a witness that $\limsup_n\mu_n(C)$ is not larger than $\mu(C)$.
\end{proof}

\subsection{Effective Convergence in the Prokhorov Metric}

We say that a sequence $\{\mu_n\}_{n\in\N}$ in $\M(X)$ \emph{converges effectively in the Prokhorov metric} $d_P$ to a measure $\mu$ if there is a computable function $\epsilon:\N\rightarrow\N$ such that for every $n,N\in\N$, $n\geq\epsilon(N)$ implies $d_P(\mu_n,\mu)<2^{-N}$. 
Since $\M(X)$ forms a computable metric space under $d_P$, it follows that every uniformly computable sequence of measures in $\M(X)$ converges to a computable measure in $d_P$. 

We now proceed to show that Theorem 4.1 in \cite{R24} generalizes to computable Polish spaces.

\begin{theorem}\label{thm:cpm}
    Suppose $\{\mu_n\}_{n\in\N}$ is uniformly computable. The following are equivalent:
    \begin{enumerate}
        \item $\{\mu_n\}_{n\in\N}$ is effectively weakly convergent;
        \item $\{\mu_n\}_{n\in\N}$ converges effectively in $d_P$.
    \end{enumerate}
\end{theorem}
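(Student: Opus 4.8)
The plan is to prove both implications, viewing effective $\rho$-convergence and effective weak convergence as the two computable faces of the classical fact that the Prokhorov metric metrizes weak convergence, and supplying the moduli that the classical equivalence omits. The implication (2) $\Rightarrow$ (1) is the easy one; the real work is (1) $\Rightarrow$ (2).

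For (2) $\Rightarrow$ (1), I would first note that, since $(\M(X),\rho,\alpha_{\D})$ is a computable Polish space and $\{\mu_n\}_{n\in\N}$ is uniformly computable, a computable modulus of $\rho$-convergence forces the limit $\mu$ to be computable. I would then verify condition (\ref{thm:ept1::opn}) of Theorem \ref{thm:ept1} and invoke that theorem. Fix $U\in\Sigma^0_1(X)$, write $U=\bigcup_i I_i$ with $I_i=B(s_{j_i},q_i)$, and given a rational $r$ in the left Dedekind cut of $\mu(U)$, search (using that $\mu(U)$ is left-c.e.) for a finite subunion $J=I_{i_1}\cup\cdots\cup I_{i_k}$ and a rational $\eta>0$ with $\mu(J')>r+\eta$, where $J'=\bigcup_\ell B(s_{j_{i_\ell}},q_{i_\ell}-\eta)$ is the $\eta$-shrinkage of $J$; this search terminates because $\mu(J')\uparrow\mu(J)>r$ as $\eta\downarrow 0$. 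Since $B(J',\eta)\subseteq J\subseteq U$, the defining inequality of $\rho$ gives $\mu(J')\le\mu_n(B(J',\eta))+\eta\le\mu_n(U)+\eta$ whenever $\rho(\mu_n,\mu)<\eta$; taking $n_0$ from the modulus so that $\rho(\mu_n,\mu)<\eta$ for $n\ge n_0$ yields $\mu_n(U)>r$ for all $n\ge n_0$. Setting $g(r)=n_0$ gives the required witness, uniformly in an index of $U$, which is (\ref{thm:ept1::opn}).

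For (1) $\Rightarrow$ (2), let $\{\mu_n\}_{n\in\N}$ effectively weakly converge to the computable measure $\mu$. By Proposition \ref{prop:ewc2et} the sequence is effectively tight, so on input $N$ I would compute a computably compact $K$ with $\mu_n(X\setminus K)<2^{-(N+2)}$ for all $n$; enlarging $K$ by the computably compact set supplied by Proposition \ref{prop:finite} for $\mu$ (computably compact sets are closed under finite union) I also arrange $\mu(X\setminus K)<2^{-(N+2)}$. From a minimal-cover name of $K$ I cover $K$ by finitely many rational balls of radius $<2^{-(N+1)}$; perturbing their radii (possible effectively because $\mu$ is computable) so that each sphere is $\mu$-null, the balls $B_1,\dots,B_m$ and the disjointified pieces $A_j=B_j\setminus(B_1\cup\cdots\cup B_{j-1})$ $(1\le j\le m)$ become $\mu$-almost decidable with indices computable from the data, each of diameter $<2^{-N}$, and $\{A_0=X\setminus K,A_1,\dots,A_m\}$ partitions $X$. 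The finish is the standard partition estimate: if $\sum_{j=1}^m|\mu(A_j)-\nu(A_j)|<2^{-N}/2$ and $\mu(A_0),\nu(A_0)<2^{-(N+2)}$, then for every Borel $A$, writing $A^\sharp=\bigcup\{A_j:j\ge1,\ A_j\cap A\ne\emptyset\}\subseteq B(A,2^{-N})$, one obtains $\mu(A)\le\nu(B(A,2^{-N}))+2^{-N}$ and symmetrically, whence $\rho(\mu,\nu)\le 2^{-N}$. It therefore remains only to make the finitely many comparisons $|\mu_n(A_j)-\mu(A_j)|$ small effectively, which is exactly what condition (\ref{thm:ept1::a.d.}) of Theorem \ref{thm:ept1} delivers: a computable modulus $g_j$ for each $\mu$-almost decidable $A_j$. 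Taking $n_1=\max_{1\le j\le m}g_j(N+\lceil\log_2 m\rceil+1)$ secures $\sum_{j=1}^m|\mu_n(A_j)-\mu(A_j)|<2^{-N}/2$ for $n\ge n_1$, and $\epsilon(N)=n_1$ is the desired computable modulus of $\rho$-convergence.

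The main obstacle is the (1) $\Rightarrow$ (2) direction, and within it the effective construction of a finite $\mu$-almost decidable partition into small-diameter pieces together with the reduction of $\rho$-closeness to those finitely many set comparisons. The delicate points are the boundary-perturbation step, guaranteeing that the covering balls are $\mu$-continuity sets carrying computable almost-decidable pairs, and the verification that the disjointified pieces remain $\mu$-almost decidable (so that Theorem \ref{thm:ept1}(\ref{thm:ept1::a.d.}) applies); by contrast, the classical partition inequality and the assembly of finitely many moduli are routine.
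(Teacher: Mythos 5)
Your proposal is correct in substance and shares the paper's master plan --- both directions are routed through the Effective Portmanteau Theorem (Theorem \ref{thm:ept1}) --- but each direction is implemented differently. For (2) $\Rightarrow$ (1), the paper verifies the closed-set condition (\ref{thm:ept1::clsd}): it proves Lemma \ref{lem:cn} (that $\ol{B(C,s)}$ is $\Pi^0_1$ when $C$ is), so that $\mu(\ol{B(C,2^{-N})})$ is right-c.e., and then applies the Prokhorov inequality. Your verification of the open-set condition (\ref{thm:ept1::opn}) via finite subunions and $\eta$-shrinkage of rational balls (so that $B(J',\eta)\subseteq J$ by the triangle inequality) is a genuine simplification: it needs no auxiliary lemma, and the searches terminate and are effective for exactly the reasons you give. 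For (1) $\Rightarrow$ (2), the paper does not use effective tightness at all: it covers all of $X$ by small $\mu$-almost decidable balls (Lemma \ref{lem:eoc}), searches for a finite subfamily of $\mu$-mass at least $\mu(X)-2^{-(N+2)}$ (a terminating, effective search since the partial unions have left-c.e.\ measure and $\mu(X)$ is computable), and works with the finite collection $\A$ of unions of these balls, obtaining moduli for every $A\in\A$ from condition (\ref{thm:ept1::a.d.}); the set compared against a Borel set $E$ is then the union of the balls meeting $E$. You instead invoke Proposition \ref{prop:ewc2et} to get a computably compact $K$ carrying all but $2^{-(N+2)}$ of every $\mu_n$ (and of $\mu$, after enlarging via Proposition \ref{prop:finite}), cover $K$, and disjointify. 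Both routes end in the same classical estimate; yours trades the paper's mass-capture search for the tightness machinery, at the cost of an extra (routine but necessary) verification that $\mu$-almost decidable sets form an effective algebra, so the disjointified pieces inherit almost decidability with computable indices.

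The one step you must repair is the radius perturbation, which you rightly flag as delicate. A $\mu$-null sphere is not by itself enough for almost decidability: with $U=B(s,r)$ and $V=\{x: d(x,s)>r\}$, the definition also requires $\ol{U\cup V}=X$, i.e., the sphere must have empty interior, which can fail for particular radii. Moreover, effectively finding radii with $\mu$-null spheres is not simply a consequence of ``$\mu$ is computable'': the relation $\mu(\ol{B}(s,r))=\mu(B(s,r))$ is not a c.e.\ event, and the paper produces such radii via the computable Baire Category Theorem --- this is precisely the content of Lemma \ref{lem:eoc} (and, in sharper form, Lemma \ref{lem:a.d.basis}). The clean fix is to drop the ad hoc perturbation: apply Lemma \ref{lem:eoc} to obtain a uniformly $\mu$-almost decidable cover of all of $X$ by balls of radius less than $2^{-(N+1)}$, then use computable compactness of $K$ to extract a finite subcover effectively. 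With that substitution your argument goes through; note also that your family $\{A_0,A_1,\dots,A_m\}$ is a cover rather than a partition (the $A_j$ may meet $X\setminus K$), which is harmless since your estimate uses only the pairwise disjointness of $A_1,\dots,A_m$ together with the smallness of $\mu(A_0)$ and $\mu_n(A_0)$.
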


First, we need the following lemma.

\begin{lemma}\label{lem:eoc}
    Let $\mu\in\M(X)$ be computable, and let $s>0$ be rational. It is possible to compute an open cover of $X$ consisting of open balls with radius less that $s$, each of which is a $\mu$-almost decidable set.
\end{lemma}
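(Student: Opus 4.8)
The plan is to build the cover out of balls centered at the rational points $s_i=\alpha(i)$, choosing for each center a single radius drawn from a fixed rational window that is bounded away from $0$ and below $s$, say $J=[\tfrac{3}{5}s,\tfrac{7}{10}s]$. Concretely, for each $i$ I will compute (uniformly in $i$ and in an index of $\mu$) a radius $r_i\in J$, as a computable real, so that the open ball $B(s_i,r_i)$ is $\mu$-almost decidable, together with an index of a $\mu$-almost decidable pair for it. The resulting family $\{B(s_i,r_i)\}_{i\in\N}$ covers $X$: given $x\in X$, density of $\ran\alpha$ produces $s_i$ with $d(x,s_i)<\tfrac{3}{5}s\leq r_i$, so $x\in B(s_i,r_i)$; and each radius is $<s$. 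Since the radii and the pair-indices are produced uniformly, enumerating them is exactly computing the desired open cover. Thus everything reduces to the single-center problem.

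For a rational center $c=s_i$ and a computable real $r$, the natural candidate pair is $U=\{x:d(x,c)<r\}$ and $V=\{x:d(x,c)>r\}$. Both are $\Sigma^0_1$ uniformly from $c$ and a name of $r$, they are disjoint, and $U\subseteq B(c,r)\subseteq X\setminus V=\overline{B}(c,r)$. Hence $(U,V)$ is a $\mu$-almost decidable pair for $B(c,r)$ as soon as the two remaining clauses of the definition hold: $\mu(U\cup V)=\mu(X)$, i.e.\ the sphere $S=\{x:d(x,c)=r\}$ is $\mu$-null, and $\overline{U\cup V}=X$, i.e.\ $S$ has empty interior. So the single-center task is to compute a radius $r\in J$ whose sphere is both $\mu$-null and nowhere dense.

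The computability engine is that $r\mapsto\mu(\overline{B}(c,r))$ is right-c.e.: since $X\setminus\overline{B}(c,r)=\{d(\cdot,c)>r\}$ is $\Sigma^0_1$ with left-c.e.\ measure, $\mu(\overline{B}(c,r))=\mu(X)-\mu(\{d(\cdot,c)>r\})$ is right-c.e., and $r\mapsto\mu(B(c,r))$ is left-c.e.; consequently $r\mapsto\mu(S)=\mu(\overline{B}(c,r))-\mu(B(c,r))$ is upper-semicomputable as a function on $J$. This already lets one pin a null radius concretely, by a pigeonhole over rational closed subannuli of $J$ (whose measures sum to at most $2\mu(X)$, each interior sphere being shared by two neighbors) combined with an effective search that confirms some subannulus has small measure, yielding nested intervals $[a_k,b_k]\subseteq J$ with $\mu(\{a_k\le d(\cdot,c)\le b_k\})<2^{-k}$ whose common limit $r$ satisfies $\mu(S)=0$. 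To secure nowhere-density simultaneously I instead pass to genericity: the null radii $\{r\in J:\mu(S)=0\}=\bigcap_m\{r:\mu(S)<2^{-m}\}$ form a $\Pi^0_2$ set (each slice is $\Sigma^0_1$ by upper-semicomputability), and it is dense because its complement, the atoms of the pushforward of $\mu$ under $d(\cdot,c)$, is countable. The nowhere-dense radii are co-countable as well: if a rational ball $I_j$ lies in $S$ then $d(\cdot,c)$ is constant on $I_j$, so each $I_j$ forces at most one radius. The set of good radii is therefore a dense $\Pi^0_2$ subset of $J$, and the (uniform form of the) Computable Baire Category Theorem, Theorem \ref{thm:cbct}, extracts a computable good radius $r_i$ uniformly in $i$; from a name of $r_i$ one reads off indices of $U$ and $V$, hence a $\mu$-almost decidable index of $B(s_i,r_i)$.

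The main obstacle is precisely the nowhere-density clause $\overline{U\cup V}=X$: the measure-theoretic machinery controls only $\mu(S)$, and a $\mu$-null sphere can genuinely have nonempty interior when $\mu$ lacks full support, so the naive null-radius search need not suffice. Making the genericity route rigorous requires showing that the nowhere-dense radii form a (uniformly) $\Pi^0_2$ set — equivalently, that the countable collection of ``constant-distance'' radii (values of $d(\cdot,c)$ on the rational balls on which it is constant) is $\Sigma^0_2$, which hinges on the semicomputability of $\sup$ and $\inf$ of $d(\cdot,c)$ over rational balls. This is where the bulk of the technical care lies; once it is in place, CBCT finishes the argument and the uniformity in $i$ delivers the computable cover.
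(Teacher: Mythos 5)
Your overall strategy is the same as the paper's (the paper defers to Lemma 5.1.1 of Hoyrup--Rojas, and its own Lemma \ref{lem:a.d.basis} runs the identical argument): for each rational center $s_i$, the radii whose sphere is $\mu$-null form a dense, uniformly $\Pi^0_2$ subset of an interval, because $r\mapsto\mu(\overline{B}(s_i,r))-\mu(B(s_i,r))$ is upper semicomputable and only finitely many spheres can have measure $\geq 2^{-m}$; then the Computable Baire Category Theorem (Theorem \ref{thm:cbct}) extracts computable radii, and the pair $\left(\{d(\cdot,s_i)<r\},\{d(\cdot,s_i)>r\}\right)$ witnesses almost decidability. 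You also correctly identified the genuine subtlety that the definition of a $\mu$-almost decidable pair demands $\overline{U\cup V}=X$, so nullity of the sphere alone does not suffice: the sphere must in addition have empty interior, which can fail when $\mu$ lacks full support.

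However, your argument does not actually close that last step: you explicitly defer it (``this is where the bulk of the technical care lies''), proposing to show that the somewhere-dense radii form a $\Sigma^0_2$ set via semicomputability of $\sup$ and $\inf$ of $d(\cdot,c)$ over rational balls. As written this is a gap, and the route you sketch is heavier than needed. The standard trick --- the one the paper uses in Lemma \ref{lem:a.d.basis} via the sets $V_{i,j}$ --- is: since $\ran{\alpha}$ is dense, every nonempty open set contains a rational point, so the sphere $\{x: d(x,s_i)=r\}$ has empty interior as soon as it contains no rational point, i.e.\ as soon as $r\neq d(s_i,s_j)$ for all $j$. Each $d(s_i,s_j)$ is a computable real, so $V_{i,j}=(0,s)\setminus\{d(s_i,s_j)\}$ is a dense $\Sigma^0_1$ subset of $(0,s)$ uniformly in $i$ and $j$, and you simply intersect $\bigcap_j V_{i,j}$ with your null-radius $\Pi^0_2$ set before invoking CBCT; the resulting set is still dense $\Pi^0_2$ because you have only removed countably many points, each a computable real. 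With this substitution your proof is complete; your per-center radii drawn from the window $[\tfrac{3}{5}s,\tfrac{7}{10}s]$ (or, as in the paper, a single computable sequence of good radii dense in $(0,s)$, used at every center) then yield the required cover, uniformly.
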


\begin{proof}
    Adapt the proof of Lemma 5.1.1 in \cite{HR09b} by replacing $\R^+$ with $(0,s)$.
\end{proof}

The proof of the classical version of Theorem \ref{thm:cpm} makes use of the classical Portmanteau Theorem as well as a classical version of Lemma \ref{lem:eoc}. 
As we shall see below, Theorem \ref{thm:cpm} makes use of the effective Portmanteau Theorem as well as Lemma \ref{lem:eoc}. 
However, before proving Theorem \ref{thm:cpm}, we also require the following lemma. 

\begin{lemma}\label{lem:cn}
    If $C\in\Pi^0_1(X)$, then $\ol{B(C,s)}\in\Pi^0_1(X)$ for any rational $s>0$.
\end{lemma}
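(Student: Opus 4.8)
The plan is to prove the statement by showing that the complement $X\setminus\ol{B(C,s)}$ is $\Sigma^0_1$, uniformly in a $\Pi^0_1$-index of $C$ and in $s$; since $\ol{B(C,s)}$ is automatically closed, this exhibits it as a $\Pi^0_1$ set. Throughout I would write $V=X\setminus C=\bigcup_{i\in W}I_i$ for the given $\Sigma^0_1$ presentation of the complement, where each $I_i=B(b_i,\rho_i)$ is a rational ball and $W$ is c.e.

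First I would record the topological characterization of the target. As $\ol{B(C,s)}$ is the closure of the open $s$-neighborhood, its complement is the interior of $\{y:d(y,C)\ge s\}=\{y:B(y,s)\subseteq V\}$; equivalently,
\[
X\setminus\ol{B(C,s)}=\bigcup\{B(q,r): q,r\in\Q,\ B(B(q,r),s)\subseteq V\}.
\]
Indeed, a rational ball $B(q,r)$ is disjoint from $B(C,s)$ precisely when every point within $s$ of $B(q,r)$ avoids $C$, i.e. when the open set $B(B(q,r),s)$ is contained in $V$; and by density of the rational points every point of the complement lies in such a ball. Thus it suffices to effectively enumerate those rational balls $B(q,r)$ for which the containment $B(B(q,r),s)\subseteq V$ can be confirmed.

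One inclusion is immediate and already yields a $\Sigma^0_1$ lower approximation. For every $i\in W$ with $\rho_i>s$, the shrunken ball $B(b_i,\rho_i-s)$ lies in $X\setminus\ol{B(C,s)}$: if $d(y,b_i)<\rho_i-s$ then, by the triangle inequality, $B(y,s)\subseteq B(b_i,\rho_i)=I_i\subseteq V$, and the same holds on a whole neighborhood of $y$, so $y\in\Int(X\setminus B(C,s))$. The family $\{B(b_i,\rho_i-s):i\in W,\ \rho_i>s\}$ is therefore a c.e. collection of rational balls contained in the complement, obtained uniformly from $W$ and $s$ by a purely arithmetic computation on the centers and radii.

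The main obstacle is the reverse inclusion: showing that an effectively enumerable family actually exhausts $X\setminus\ol{B(C,s)}$. The difficulty is that confirming $B(B(q,r),s)\subseteq V$ is a containment of one open set in another, which is not semidecidable in general, and the shrunken-ball family can under-cover when the relevant fattened ball is not contained in a single member of the presentation of $V$. I would handle this by fixing $x\in X\setminus\ol{B(C,s)}$, choosing rational $\epsilon>0$ with $B(x,\epsilon)\cap B(C,s)=\emptyset$, picking a rational ball $B(q,r)\ni x$ inside $B(x,\epsilon)$, and searching the enumeration of $V$ for a \emph{finite} subset $F\subseteq W$ whose union covers the fattened ball $B(B(q,r),s)$, so that the containment is witnessed by finitely much of $V$. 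Making this search provably effective is exactly the crux: it amounts to an effective finite-subcover argument for the closed sets involved, and this is the step I expect to demand the most care, since it is where the structure of $X$ and the precise behaviour of the closure $\ol{B(C,s)}$ (as opposed to the closed $s$-neighborhood $\{d(\cdot,C)\le s\}$) genuinely enter.
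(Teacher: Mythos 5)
Your strategy coincides with the paper's up to the decisive step: both proofs express $X\setminus\ol{B(C,s)}$ as a union of rational open balls disjoint from $B(C,s)$ and then try to enumerate such balls effectively, and your two preliminary observations (the characterization $B(q,r)\cap B(C,s)=\emptyset \iff B(B(q,r),s)\subseteq V$, and the soundness of the shrunken-ball family $\{B(b_i,\rho_i-s):i\in W,\ \rho_i>s\}$) are correct. But the proof is not finished, and the route you sketch for the missing step would fail. Verifying $B(B(q,r),s)\subseteq V$ is a containment of one open set in another, which, as you note, is not semidecidable; your proposed fix --- searching for a finite $F\subseteq W$ with $B(B(q,r),s)\subseteq\bigcup_{i\in F}I_i$ --- has two defects. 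First, there is no effective compactness to make the search meaningful: in a general computable Polish space neither the fattened ball nor its closure is computably compact (closed balls need not even be compact). Second, even classically the search can diverge on balls that do lie in the complement: the containment $B(B(q,r),s)\subseteq V$ in no way implies that finitely many sets of the \emph{given} presentation of $V$ cover the fattened ball. So what you have constructed is a c.e.\ family of rational balls contained in $X\setminus\ol{B(C,s)}$, with no argument that it exhausts it --- exactly the ``reverse inclusion'' you flagged as the crux.

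The idea your proposal is missing is the one the paper uses to bypass compactness altogether: replace the set-containment test by a numerical test on the distance function. The paper enumerates $i$ into $E_A=\{i:I_i\cap\ol{B(C,s)}=\emptyset\}$ as soon as $d(a_i,C)>r_i+s$, where $I_i=B(a_i,r_i)$; by the triangle inequality this inequality forces $I_i\cap B(C,s)=\emptyset$ and hence $I_i\cap\ol{B(C,s)}=\emptyset$, and the test is semidecidable because $x\mapsto d(x,C)$ is lower semicomputable from a $\Pi^0_1$ index of $C$ (Theorem 5.1.2 of Weihrauch, cited in the paper), so the event ``some rational below $d(a_i,C)$ exceeds $r_i+s$'' can be recognized. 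That said, your closing remark about where the structure of $X$ ``genuinely enters'' is well placed: for the paper's enumeration to cover all of $X\setminus\ol{B(C,s)}$ one needs the converse implication, namely that every $x\notin\ol{B(C,s)}$ lies in a rational ball $B(a,r)$ with $d(a,C)>r+s$, which requires $d(x,C)>s$ for such $x$, i.e.\ $\{x:d(x,C)\leq s\}\subseteq\ol{B(C,s)}$. This holds in $\R^n$ and in any space where metric balls behave geodesically, but it is a genuine geometric input in an abstract computable Polish space, not a consequence of the definitions.
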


\begin{proof}
    Let $\{I_i\}_{i\in\N}$ be an enumeration of all rational open balls of $X$. 
    Then, for each $i\in\N$, $I_i=B(a_i,r_i)$ for some $a_i,r_i\in\Q$ with $r_i>0$. 
    Let $A=\ol{B(C,s)}$, and let $E_A=\{i\in\N:I_i\cap A=\emptyset\}$. 
    It suffices to show that $E_A$ is c.e. \ 
    Now, for each $i\in\N$, we enumerate $i$ into $E_A$ whenever $B(a_i,r_i)\cap A=\emptyset$. 
    Note that $B(a_i,r_i)\cap A=\emptyset$ if and only if $d(a_i,A)>r_i$, which occurs if and only if $d(a_i,C)>r_i+s$. 
    Since $C\in\Pi^0_1(X)$, $x\mapsto d(x,C)$ is lower semicomputable (Theorem 5.1.2 in \cite{Weihrauch.2000}). 
    Thus, the enumeration is effective. 
    Since $s>0$ was arbitrary, the result follows.
\end{proof}

\begin{proof}[Proof of Theorem \ref{thm:cpm}]
    Suppose that $\{\mu_n\}_{n\in\N}$ converges effectively in $d_P$ to $\mu$. Then, $\mu$ is computable, and we have a computable function $\epsilon:\N\rightarrow\N$ such that for all $N\in\N$ and all $n\geq\epsilon(N)$, $d_P(\mu_n,\mu)<2^{-N}$. In particular, for any $C\in\Pi^0_1(X)$ and all $n\geq\epsilon(N)$,
    \[
        \mu_n(C)\leq\mu(B(C,2^{-N}))+2^{-N}\text{ and }\mu(C)\leq\mu_n(B(C,2^{-N}))+2^{-N}.
    \]
    By Theorem \ref{thm:ept1}, it suffices to compute an index of a witness that $\limsup_n\mu_n(C)$ is not larger than $\mu(C)$ from an index $e\in\N$ of $C\in\Pi^0_1(X)$.

    Fix $r\in\Q$. 
    Wait until $r$ is enumerated into the right Dedekind cut of $\mu(C)$.
    Once $r$ has been enumerated into the right Dedekind cut of $\mu(C)$, search for the first $M_0\in\N$ so that $r-\mu(C)>2^{-M_0}$. 
    By Lemma \ref{lem:cn} and the fact that $\mu$ is computable, $\mu(\ol{B(C,2^{-N})})$ is right-c.e. for any $N\in\N$. 
    Thus, search for the first $N_0$ such that $r-\mu(\ol{B(C,2^{-N_0})})>2^{-M_0}$. 
    Let $M=M_0+N_0+1$, and let $n_0=\epsilon(M)$. 
    Therefore, for all $n\geq n_0$,
    \[
        \mu_n(C)\leq\mu(B(C,2^{-M}))+2^{-M}\leq\mu(\ol{B(C,2^{-M})})+2^{-M}<r-2^{-M}+2^{-M}=r.
    \]
    It follows that $n_0$ is an index of a witness that $\limsup_n\mu_n(C)$ is not larger than $\mu(C)$.

    Next, suppose that $\{\mu_n\}_{n\in\N}$ effectively weakly converges to $\mu$. Then, $\mu$ is computable. By Theorem \ref{thm:ept1}, we can compute for every $\mu$-almost decidable $A$ an index of a modulus of convergence of $\{\mu_n(A)\}_{n\in\N}$ from a $\mu$-almost decidable index of $A$.

    We build the function $\epsilon:\N\rightarrow\N$ by the following effective procedure. 
    First, let $N\in\N$. 
    By Lemma \ref{lem:eoc}, we can compute a sequence $\{B_j\}_{j\in\N}$ of uniformly $\mu$-almost decidable rational open balls in $X$ with radius less than $2^{-(N+3)}$ such that $\bigcup_{j=1}^{\infty}B_j=X$. 
    Search for the first $k_0$ so that $\mu(\bigcup_{j=1}^{k_0}B_j)\geq\mu(X)-2^{-(N+2)}$. 
    Let
    \[
        \A=\left\{\bigcup_{j\in J}B_j:J\subseteq\{1,\ldots,k_0\}\right\}.
    \]
    Then, $\A$ is a finite collection of $\mu$-almost decidable sets. 
    Define $\epsilon(N)$ to be the smallest index so that $|\mu_n(A)-\mu(A)|<2^{-(N+2)}$ for every $A\in\A$ and every $n\geq\epsilon(N)$. 
    Thus,
    \[
        \mu_n\left(X\setminus\left(\bigcup_{j=1}^{k_0}B_j\right)\right)\leq\mu\left(X\setminus\left(\bigcup_{j=1}^{k_0}B_j\right)\right)+2^{-(N+2)}\leq2^{-(N+1)}
    \]
    for all $n\geq\epsilon(N)$.

    To see that $\epsilon$ is the desired function, fix $E\in\B(X)$. 
    Let
    \[
        A_0=\bigcup\{B_j:(j\in\{1,\ldots,k_0\})\wedge(B_j\cap E\neq\emptyset)\}.
    \]
    Then, $A_0\in\A$ and satisfies the following properties:
    \begin{enumerate}
        \item $A_0\subset B(E,2^{-(N+2)})\subseteq B(E,2^{-N})$.
        \item $E\subset A_0\cup\left(X\setminus\left(\bigcup_{j=1}^{k_0}B_j\right)\right)$.
        \item $|\mu_n(A_0)-\mu(A_0)|<2^{-(N+2)}$ for all $n\geq\epsilon(N)$.
    \end{enumerate}
    Therefore, for all $n\geq\epsilon(N)$,
    \begin{align*}
        \mu(E)&\leq\mu(A_0)+2^{-(N+2)}\\
        &<\mu_n(A_0)+2^{-(N+1)}\\
        &\leq\mu_n(B(E,2^{-(N+2)}))+2^{-(N+1)}<\mu_n(B(E,2^{-N}))+2^{-N}
    \end{align*}
    and
    \[
        \mu_n(E)\leq\mu_n(A_0)+2^{-(N+1)}<\mu(A_0)+2^{-N}\leq\mu(B(E,2^{-N}))+2^{-N}.
    \]
    Since $E\in\B(X)$ was arbitrary, it follows that $d_P(\mu_n,\mu)<2^{-N}$ for all $n\geq\epsilon(N)$, as desired.
\end{proof}

\section{An Effective Version of Prokhorov's Theorem}

In the previous two sections, we developed the framework necessary to formulate an effective version of Prokhorov's Theorem. 

Before we state the main result of this paper, note that for any computable measure $\nu\in\M(X)$, $\nu(B)$ is left-c.e. (right-c.e.) uniformly in an index of $B$ if $B$ is a rational open (closed) ball. However, it is not guaranteed that all rational open and closed balls have computable $\nu$-measure. The situation becomes more complicated when looking at rational open and closed balls across a computable sequence of measures. Another concern is the fact that a rational closed ball need not to be the closure of a rational open ball. Thankfully, the following lemma provides an effective solution to this problem.

\begin{lemma}\label{lem:a.d.basis}
    Let $\{\mu_n\}_{n\in\N}$ be a computable sequence of probability measures in $\M(X)$. 
    There is a computable sequence of radii $\{r_j\}_{j\in\N}$ such that $\{B(s_i,r_j)\}_{i,j\in\N}$ is a basis for the topology on $X$ with the following properties:
    \begin{enumerate}
        \item $B(s_i,r_j)$ is $\mu_n$-almost decidable uniformly in $i$, $j$, and $n$.
        \item $\overline{B(s_i,r_j)}=\overline{B}(s_i,r_j)$ for each $i$ and $j$.
    \end{enumerate}
\end{lemma}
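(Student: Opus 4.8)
The plan is to produce the radii $\{r_j\}_{j\in\N}$ by applying the Computable Baire Category Theorem (Theorem~\ref{thm:cbct}) to the interval of radii $(0,1)$, viewed as a computable metric space with the rationals as its rational points. The first step is to observe that the two desired properties reduce to a purely geometric condition on the radius. Suppose $r$ is a computable real for which (i) $\mu_n(\{x:d(x,s_i)=r\})=0$ for all $i,n$, and (ii) $\ol{B(s_i,r)}=\ol{B}(s_i,r)$ for all $i$. Then the pair $(B(s_i,r),\,X\setminus\ol{B}(s_i,r))$ is a $\mu_n$-almost decidable pair of $B(s_i,r)$, uniformly in $i$, $n$, and an index of $r$: the two sets are uniformly $\Sigma^0_1$ and $\Pi^0_1$ because $r$ is computable, they are disjoint with union $X\setminus\{d(\cdot,s_i)=r\}$, condition (i) makes this union have $\mu_n$-measure $\mu_n(X)$, and condition (ii) forces $\{d(\cdot,s_i)=r\}=\partial B(s_i,r)$, the boundary of an open set, which is nowhere dense, so the union is dense. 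Thus properties (1) and (2) follow once we compute a sequence of radii satisfying (i) and (ii) that is dense in $(0,1)$; the basis property is then routine, since arbitrarily small such radii together with the dense points $\{s_i\}$ cover every $B(x,\epsilon)$ from the inside.

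Next I would verify that the set $G\subseteq(0,1)$ of radii satisfying (i) and (ii) is \emph{dense}, i.e.\ co-countable. For fixed $i,n$ the spheres $\{d(\cdot,s_i)=r\}$ are pairwise disjoint, so since $\mu_n$ is finite only countably many can have positive measure, and (i) fails for at most countably many $r$. For (ii), if $\ol{B}(s_i,r)\neq\ol{B(s_i,r)}$ then some $x$ with $d(x,s_i)=r$ admits a ball $B(x,\epsilon)$ disjoint from $B(s_i,r)$; choosing a rational ball $W=B(s_k,q)$ with $x\in W\subseteq B(x,\epsilon)$ shows $r=\inf_{y\in W}d(y,s_i)$ with the infimum attained in $W$, so $r$ is determined by $W$. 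As there are countably many rational balls, (ii) fails for at most countably many $r$. Taking the union over all $i,n$, the complement of $G$ is countable, so $G$ is dense.

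The substantive effective content is to present $G$ as a $\Pi^0_2$ set. For condition (i) this is clean: the set $A_{i,n,k}=\{r:\mu_n(\{d(\cdot,s_i)=r\})<2^{-k}\}$ is uniformly $\Sigma^0_1$. Indeed, for rationals $p<q$ the quantity $\mu_n(\{p\le d(\cdot,s_i)\le q\})=\mu_n(\ol{B}(s_i,q))-\mu_n(B(s_i,p))$ is right-c.e.\ uniformly in $i,n,p,q$ and dominates $\mu_n(\{d(\cdot,s_i)=r\})$ for every $r\in[p,q]$; one enumerates into $A_{i,n,k}$ each rational interval $(p,q)$ on which this quantity is seen to be $<2^{-k}$, and every $r$ satisfying (i) lies in such an interval by right-continuity of $r'\mapsto\mu_n(\ol{B}(s_i,r'))$ and left-continuity of $r'\mapsto\mu_n(B(s_i,r'))$. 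Intersecting over $i,n,k$ exhibits the condition-(i) part of $G$ as a dense $\Pi^0_2$ set.

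The hard part will be the effectivity of condition (ii). The exceptional radii arise as attained infima $\inf_{y\in B(s_k,q)}d(y,s_i)$, and such infima are only right-c.e.\ (upper semicomputable), not computable; consequently the naive complement $\{r:r\neq\inf_{y\in W}d(y,s_i)\}$ is not visibly $\Sigma^0_1$, and one cannot directly certify avoidance of a right-c.e.\ point on both sides while maintaining density. To handle this I would enumerate the exceptional radii as a uniformly right-c.e.\ family $\{\alpha_m\}_{m\in\N}$ (indexed by the rational ball $W$ and the center index $i$), for which the certified side $r>\alpha_m$ is $\Sigma^0_1$, and show that $(0,1)\setminus\{\alpha_m\}$ still contains a dense $\Pi^0_2$ set compatible with the condition-(i) set above; controlling the uncertified side without destroying density is the main technical hurdle. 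Once $G$ is realized as a dense $\Pi^0_2$ set, Theorem~\ref{thm:cbct} yields a computable sequence $\{r_j\}_{j\in\N}$ dense in $G$, and the remaining claims—that $\{B(s_i,r_j)\}_{i,j}$ is a basis and that each ball is uniformly $\mu_n$-almost decidable with $\ol{B(s_i,r_j)}=\ol{B}(s_i,r_j)$—follow immediately from the reductions above.
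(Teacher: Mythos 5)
Your reduction of properties (1) and (2) to the two radius conditions, the verification that $(B(s_i,r),\,X\setminus\overline{B}(s_i,r))$ is then a $\mu_n$-almost decidable pair (including the observation that condition (ii) makes the sphere a nowhere dense boundary, so the pair is dense), the co-countability arguments, and the uniformly $\Sigma^0_1$ presentation of the condition-(i) radii are all correct, and they follow the same strategy as the paper: present the good radii as a dense $\Pi^0_2$ subset of $(0,\infty)$ and invoke the Computable Baire Category Theorem (Theorem \ref{thm:cbct}). The paper's proof intersects sets $U_{n,i,k}=\{r:\mu_n(\overline{B}(s_i,r))<\mu_n(B(s_i,r))+1/k\}$ (your condition-(i) sets), $V_{i,j}=(0,\infty)\setminus\{d(s_i,s_j)\}$, and $W_i=\{r:\overline{B(s_i,r)}=\overline{B}(s_i,r)\}$. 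However, your proposal stops exactly where a proof is required: you never produce a $\Pi^0_2$ (equivalently, uniformly dense $\Sigma^0_1$) presentation of a set of radii satisfying condition (ii); you only identify the obstacle (the exceptional radii are attained infima over rational balls, hence in general only right-c.e.) and sketch a hoped-for strategy, explicitly deferring ``the main technical hurdle.'' Since the whole construction funnels through Theorem \ref{thm:cbct}, which needs an actual dense $\Pi^0_2$ set, the lemma is not proved by your proposal.

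For comparison, the step you leave open is precisely the step the paper settles by bare assertion: it declares that each $W_i$ is a dense $\Sigma^0_1$ set, uniformly in $i$, with no supporting argument, and your suspicion that this point is delicate is justified. First, $W_i$ need not even be open: in the computable Polish space $X=\{0\}\cup\{1-1/n:n\geq 2\}\cup\{1\}$, every radius $1-1/n$ fails condition (ii) for the center $0$, while their limit $1$ satisfies it (in this example the bad radii are distances between rational points, so the sets $V_{i,j}$ happen to remove them). Second, $W_i$ can be open yet not $\Sigma^0_1$: take $a\in(0,1)$ right-c.e.\ and incomputable, let $X=\{0\}\cup[a,1]$ with rational points $0$ together with an effective enumeration of $\Q\cap(a,1)$; then the unique radius failing (ii) for the center $0$ is $a$ itself, so $W_0=(0,\infty)\setminus\{a\}$, and this set cannot be $\Sigma^0_1$, since a trisection search over any effectively open co-singleton computes its excluded point, contradicting incomputability of $a$. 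Moreover, taking $\mu_n=\delta_0$ for all $n$, neither the $U_{n,i,k}$ nor the $V_{i,j}$ exclude $a$, so the paper's intersection $R$ is not $\Pi^0_2$ as claimed. Thus you have correctly isolated a genuine gap, one that is present in the paper's own proof as well, but your proposal does not close it. Note that these examples do not refute the lemma itself (in the second one, any computable radius automatically differs from $a$, so the conclusion still holds); what is missing is either a genuinely different presentation of the condition-(ii) radii or a reformulation that avoids condition (ii), e.g.\ one that works directly with closed balls $\overline{B}(s_i,r)$, which are uniformly $\Pi^0_1$, rather than with closures of open balls.
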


\begin{proof}
    For each $n$, $i$, and $k$, $U_{n,i,k}=\{r\in(0,\infty):\mu_n(\overline{B}(s_i,r))<\mu_n(B(s_i,r))+1/k\}$ is a dense c.e. open subset of $(0,\infty)$ uniformly in $n$, $i$, and $k$ for the following reasons:
    \begin{enumerate}
        \item $\partial B(s_i,r)\cap\partial B(s_i,r')=\emptyset$ whenever $r\neq r'$;
        \item $\{r\in(0,\infty):\mu_n(\partial B(s_i,r))\geq1/k\}$ is finite for each $n$ since each $\mu_n$ is finite;
        \item $\mu_n$ is computable uniformly in $n$.
    \end{enumerate}

    Now, for each $i$ and $j$, $V_{i,j}=(0,\infty)\setminus\{d(s_i,s_j)\}$ is a dense $\Sigma^0_1$ subset of $(0,\infty)$ uniformly in $i$ and $j$. Moreover, for each $i$, $W_i=\{r\in(0,\infty):\overline{B(s_i,r)}=\overline{B}(s_i,r)\}$ is a dense $\Sigma^0_1$ subset of $(0,\infty)$ uniformly in $i$. It follows that
    \[
    R=\bigcap_{n,i,k}U_{n,i,k}\cap\bigcap_{i,j}V_{i,j}\cap\bigcap_iW_i
    \]
    is a dense $\Pi^0_2$ subset of $(0,\infty)$. The result follows from the computable Baire Category Theorem.
\end{proof}

We will call the basis $\{B(s_i,r_j)\}_{i,j\in\N}$ obtained from Lemma \ref{lem:a.d.basis} a $\mu_n$\emph{-regular ball basis} for $X$. With this, we can state the main theorem.

\begin{theorem}[Effective Prokhorov's Theorem]\label{thm:etp}
    Let $\{\mu_n\}_{n\in\N}$ be a computable sequence of probability measures in $\M(X)$. If $\{\mu_n\}_{n\in\N}$ is effectively tight, then $\{\mu_n\}_{n\in\N}$ contains an effectively weakly convergent subsequence. Furthermore, such a sequence can be computed from an effective tightness witness of $\{\mu_n\}_{n\in\N}$ and a $\mu_n$-regular ball basis for $X$.
\end{theorem}

\begin{corollary}\label{cor:etp}
    Let $\{\mu_n\}_{n\in\N}$ be a computable sequence of probability measures in $\M(X)$. If $\{\mu_n\}_{n\in\N}$ is effectively tight, then $\{\mu_n\}_{n\in\N}$ contains at least one computable cluster point.
\end{corollary}

The theorem above indicates that effectively tightness is sufficient for the existence of a computable cluster point. However, as we will see, we also need a $\mu_n$-regular ball basis to compute an effectively weakly convergent subsequence. This is consistent with the fact that Prokhorov's Theorem is the Bolzano-Weierstrass principle on $\M(X)$ since there is no uniform procedure to compute an effectively weakly convergent subsequence of a tight computable sequence in $\M(X)$. In fact, we can show the following.

\begin{proposition}
    There exists a computable sequence $\{\mu_n\}_{n\in\N}$ of probability measures on $X$ that is tight and contains an effectively weakly convergent subsequence but is not effectively tight.
\end{proposition}

\begin{proof}
    Let $\{q_n\}_{n\in\N}$ be a computable increasing sequence of positive rationals such that $x=\lim_nq_n$ is not computable. Consider the sequence
    \[
    \mu_n=\begin{cases}
        \delta_0 & \text{if} \ n \ \text{is even}\\
        \delta_{q_{(n-1)/2}} & \text{if} \ n \ \text{is odd}\\
    \end{cases}
    \]
    on $\M(\R)$. Clearly, the sequence $\{\mu_{2n}\}_{n\in\N}$ is an effectively weakly convergent subsequence of $\{\mu_n\}_{n\in\N}$. Furthermore, $\{\mu_n\}_{n\in\N}$ is tight because $\supp\{\mu_n\}\subseteq[-x,x]$ for all $n\in\N$.
    
    By way of contradiction, suppose $\{\mu_n\}_{n\in\N}$ is effectively tight. By Proposition \ref{prop:tight}, $\{\mu_n\}_{n\in\N}$ admits an effective tail-tightness witness $\{(K_N,n_N)\}_{N\in\N}$ where each $K_N$ consisting of finite unions of rational closed balls in $\R$. Since an effective tail-tightness witness may ignore an initial segment of $\{\mu_n\}_{n\in\N}$, we may assume without loss of generality that $n_N=2N$. Note that for each $k\geq N$, $\mu_{2k+1}(K_N)>1-2^{-N}$ if and only if $q_k\in K_N$. Thus, we may assume without loss of generality that, for each $N\in\N$, $K_N=B_0\cup C_N$, where $B_0$ is a rational closed ball containing $0$ and $q_k\in C_N$ for all $k\geq N$. Let $C_N'=\bigcap_{i\leq N}C_i$. Then, $C_N'\supseteq C_{N+1}'$ for all $N\in\N$. Furthermore, since $\lim_kq_k=x$, we may assume without loss of generality that $\lim_N\diam(C_N')=0$. It follows that $\bigcap_NC_N'=\{x\}$.

    Let $u_N=\max C_N'+2^{-N}$ for each $N\in\N$. Since $C_N$ is a finite union of rational closed balls for each $N\in\N$, $u_N$ is rational for each $N\in\N$. Since $C_N'$ is nested, it follows that $\{u_N\}_{n\in\N}$ is a computable decreasing sequence of rationals. Since $\bigcap_NC_N'=\{x\}$, it follows that $\lim_Nu_N=x$. Therefore, $x$ is computable: a contradiction. As a result, $\{\mu_n\}_{n\in\N}$ is not effectively tight.
\end{proof}

\begin{proof}[Proof of Theorem \ref{thm:etp}]
    Suppose $\{\mu_n\}_{n\in\N}$ is effectively tight. By assumption, it is possible to compute an effective tightness witness $\{K_N\}_{N\in\N}$ of $\{\mu_n\}_{n\in\N}$ consisting of finite unions of rational closed balls. Without loss of generality, we may assume $K_N\subseteq K_{N+1}$ for all $N\in\N$. 

    Let $\A=\{B(x_i,r_j)\}_{i,j\in\N}$ be a $\mu_n$-regular basis for $X$. We now define the function $\alpha:\N\rightarrow\N$ be given recursively as follows: Let $\alpha(0)=n_0=0$. Assuming $\alpha(s)$ has been defined at stage $s\in\N$, search for an index $j_{s+1}$ such that $r_{j_{s+1}}<2^{-(s+1)}$. Compute a minimal cover $\U_{s+1}=\{U_{s+1,1},\ldots,U_{s+1,M_{s+1}}\}$ of $K_{s+1}$ consisting of open balls in $\A$ of radius less than $r_{j_{s+1}}$. Then, search for the first index $n_{s+1}>\alpha(s)$ such that for all $B\in\U_{s+1}$,
    \[
    |\mu_{n_{s+1}}(B)-\mu_{\alpha(s)}(B)|<2^{-(s+1)}.
    \]
    Let $\alpha(s+1)=n_{s+1}$.

    By construction, $\alpha$ is stricly increasing. Since $\{r_j\}_{j\in\N}$ is a computable sequence, the search for $j_s$ terminates at each stage $s$. Since each $K_s$ is computably compact, we can effectively obtain $\U_s$ at each stage $s$. Furthermore, since there are only finite many $B\in\U_s$ and each $B\in\U_s$ is uniformly $\mu_n$-almost decidable, the search for $n_s$ terminates at each stage $s$. Therefore, $\alpha$ is a computable increasing function. Hence, $\{\mu_{\alpha(n)}\}_{n\in\N}$ is a computable subsequence of $\{\mu_n\}_{n\in\N}$.

    We have left to show that $\{\mu_{\alpha(n)}\}_{n\in\N}$ effectively weakly converges. 
    Let $\mathcal{R}$ be the ring consisting of finite unions and relative complements of basis elements in $\A$. Note that $\sigma(\A)=\sigma(\mathcal{R})=\B(X)$. 
    Define the map $\eta:\mathcal{R}\rightarrow[0,1]$ given by $\eta(B)=\lim_s\mu_{\alpha(s)}(B)$ for each $B\in\mathcal{R}$.

    \begin{claim}\label{cl:1}
    $\eta(B)$ is a computable real uniformly in an index of $B\in\mathcal{R}$.
    \end{claim}
    \begin{proof}[Proof of Claim]
    Fix $k\in\N$. Let $B\in\A$. Search for the first $s\geq k+2$ such that $B$ is covered by $\U_{s}=\{U_{s,1},\ldots,U_{s,M_s}\}$. Note that $B\cap K_s\subseteq K_s\subseteq\bigcup_{\ell\leq M_s}U_{s,\ell}$. Let $s_0=\max\{s,k+\lceil \log_2(M_s)\rceil+1\}$. Then, for any $m\geq n\geq s_0$,
    \begin{align*}
        |\mu_{\alpha(n)}(B)-\mu_{\alpha(m)}(B)|&\leq|\mu_{\alpha(n)}(B\cap K_s)-\mu_{\alpha(m)}(B\cap K_s)|\\
        &\hspace{0.5in}+\mu_{\alpha(n)}(X\setminus K_s)+\mu_{\alpha(m)}(X\setminus K_s)\\
        &<\sum_{\ell=1}^{M_s}|\mu_{\alpha(n)}(U_{s,\ell})-\mu_{\alpha(m)}(U_{s,\ell})|+2\cdot2^{-s}\\
        &\leq\sum_{i=n}^{m-1}\sum_{\ell=1}^{M_s}|\mu_{\alpha(i)}(U_{s,\ell})-\mu_{\alpha(i+1)}(U_{s,\ell})|+2^{-(k+1)}\\
        &<\sum_{i=n}^{m-1}M_s2^{-(i+1)}+2^{-(k+1)}\\
        &<M_s2^{-n}+2^{-(k+1)}\\
        &\leq2^{-(k+1)}+2^{-(k+1)}\\
        &=2^{-k}
    \end{align*}
    
    Now, let $B_1,B_2\in\A$. Search for the first $s\geq k+4$ such that $B_1\cup B_2$ is covered by $\U_s$. Let $s_0=\max\{s,k+\lceil \log_2(M_s)\rceil+3\}$. Consider the following cases.
    
    \begin{enumerate}
        \item If $B=B_1\cup B_2$, then $|\mu_{\alpha(n)}(B)-\mu_{\alpha(m)}(B)|<2^{-k}$
        for all $m\geq n\geq s$.
        \item If $B=B_1\cap B_2$, then
        \begin{align*}\hspace{0.2in}
        |\mu_{\alpha(n)}(B)-\mu_{\alpha(m)}(B)|&\leq|\mu_{\alpha(n)}(B_1)-\mu_{\alpha(m)}(B_1)|\\
        &\hspace{0.3in}+|\mu_{\alpha(n)}(B_2)-\mu_{\alpha(m)}(B_2)|\\
        &\hspace{0.6in}+|\mu_{\alpha(n)}(B_1\cup B_2)-\mu_{\alpha(m)}(B_1\cup B_2)|\\
        &<3\cdot2^{-(k+2)}<2^{-k}
        \end{align*}
        for all $m\geq n\geq s$.
        \item If $B=B_1\setminus B_2$, then
        \begin{align*}\hspace{0.2in}
        |\mu_{\alpha(n)}(B)-\mu_{\alpha(m)}(B)|&\leq|\mu_{\alpha(n)}(B_1)-\mu_{\alpha(m)}(B_1)|\\
        &\hspace{0.5in}+|\mu_{\alpha(n)}(B_1\cap B_2)-\mu_{\alpha(m)}(B_1\cap B_2)|\\
        &<2^{-(k+2)}+3\cdot2^{-(k+2)}=2^{-k}
        \end{align*}
        for all $m\geq n\geq s$.
    \end{enumerate}
    In all possible cases, $\{\mu_{\alpha(n)}(B)\}_{n\in\N}$ is effectively Cauchy uniformly in an index of $B\in\mathcal{R}$. Therefore, the claim holds.
\end{proof}

By Claim \ref{cl:1}, $\eta$ is a premeasure on $\mathcal{R}$. Since $\sigma(\A)=\sigma(\mathcal{R})=\B(X)$, $\eta$ is a premeasure on a ring generating $\B(X)$. Thus, $\eta$ extends to a Borel measure $\mu$ on $X$ by the Carath\'{e}odory Extension Theorem. Furthermore, since each $\mu_{\alpha(n)}$ is a probability measure, $\mu$ is the unique probability measure that extends $\eta$ to $\B(X)$.

We need to show that $\mu$ is computable. Given $U\in\Sigma^0_1(X)$, $U=\bigcup_{\left<i,j\right>\in A}B(x_i,r_j)$ for some c.e. set $A\subseteq\N$. Since $\mu(B)$ is computable uniformly in an index of $B\in\A$ and by continuity of measure, it is possible to compute an increasing sequence of rationals converging to $\mu(U)$. It follows that $\mu(U)$ is left-c.e. uniformly in an index of $U$. As a result, $\mu$ is a computable probability measure. 

We have left to show that, from an index of $U\in\Sigma^0_1$, it is possible to compute an index of a witness that $\liminf_s\mu_{\alpha(s)}(U)$ is not smaller than $\mu(U)$. To see this, let $r\in\Q$ and $U\in\Sigma^0_1(X)$. Wait until $r$ is enumerated into the left Dedekind cut of $\mu(U)$. Once done, search for the first $N_0\in\N$ and the first index of a set $B_0\in\mathcal{R}$ such that $B_0\subseteq U$ and $r+2^{-N_0}<\mu(B_0)$. Next, search for the first $s_0\in\N$ such that $|\mu(B_0)-\mu_{n_{\alpha(s)}}(B_0)|<2^{-N_0}$ for all $s\geq s_0$. Set $g(r)=s_0$. then, for all $s\geq g(r)$,
    \[
        \mu_{\alpha(s)}(U)\geq\mu_{\alpha(s)}(B_0)\geq\mu(B_0)-|\mu(B_0)-\mu_{\alpha(s)}(B_0)|>r.
    \]
    Therefore, $\{\mu_{\alpha(s)}\}_{s\in\N}$ effectively weakly converges to $\mu$ by Theorem \ref{thm:ept1}.
\end{proof}

\section{Conclusion}
Earlier, we provided a framework to study the effective theory of weak convergence and tightness of measures on computable Polish spaces. The completeness assumption is crucial, as it guarantees that individual measures are effectively tight. We conjecture that there exists a computable measure on an incomplete computable metric space that is not effectively tight.

In Section 4, we observed that effective tightness was required to show the equivalence of effective weak convergence and uniform effective weak convergence in $\M(X)$ for a computable Polish space $X$. However, without effective tightness, it is possible that the equivalence may not hold. This leads us to the following question:

\begin{question}
    Is there an incomplete computable metric space in which it is possible to construct a computable sequence of measures that effectively weakly converges but does not so uniformly?
\end{question}

We also proved an effective version of Prokhorov's Theorem in Section 5. From this result, we can conclude that the effective tightness condition for computable sequences of probability measures on computable Polish spaces is quite difficult to achieve. Not only that, we were able to create a separation between effective tightness and the existence of an effectively weakly convergent subsequence. Hence, we are able to see a separation in the classical result of Polish spaces, where tightness is both a necessary \emph{and} sufficient condition for relative sequential compactness. 

Earlier, we pointed out that Prokhorov's Theorem is the Bolzano-Weierstrass principle on $\M(X)$. This raises the following questions in the context of Weihrauch reducibility.
\begin{question}
    Given a computable Polish space $X$, is $\mathsf{BWT}_{\M(X)}\equiv_W\mathsf{BWT}_{X}$? Is $\mathsf{BWT}_{\M(X)}\equiv_{sW}\mathsf{BWT}_{X}$?
\end{question}
It is natural, then, to ask the following question in the reverse mathematics setting.
\begin{question}
    Let $\mathsf{PT}_1$ be the statement ``Every tight sequence of probability measures on a Polish space $X$ contains a weakly convergent subsequence,'' and let $\mathsf{PT}_2$ be the statement ``Every sequence of probability measures on $X$ containing a weakly convergent subsequence is tight.'' What is the weakest subsystem of second-order arithmetic required to prove $\mathsf{PT}_1\leftrightarrow\mathsf{PT}_2$?
\end{question}

To approach a solution to this question, we need to consider both implications in Prokhorov's Theorem separately. This is because relatively weakly sequentially compact families of probability measures are guaranteed to be tight only when considering measures on a Polish space.

\bibliographystyle{plain}
\bibliography{ourbib}

\end{document}